\numberwithin{equation}{section}
\newtheorem{letterthm}{Theorem}
\newtheorem{lettercor}[letterthm]{Corollary}
\newtheorem{letterconj}[letterthm]{Conjecture}
\newtheorem{thm}{Theorem}[section]
\newtheorem{lem}[thm]{Lemma}
\newtheorem{prop}[thm]{Proposition}
\theoremstyle{definition}
\newcommand{\N}{\mathbf{N}}
\newcommand{\cR}{\mathcal{R}}
\newcommand{\cS}{\mathcal{S}}
\newcommand{\rL}{\mathord{\text{\rm L}}}
\newcommand{\rd}{\mathord{\text{\rm d}}}
\newcommand{\ovt}{\mathbin{\overline{\otimes}}}
\newcommand{\II}{{\rm II}}
\newcommand{\III}{{\rm III}}
\begin{document}

\title{Stability of products of equivalence relations}

\begin{abstract}
An ergodic p.m.p.\ equivalence relation $\cR$ is said to be stable if $\cR \cong \cR \times \cR_0$ where $\cR_0$ is the unique hyperfinite ergodic type $\II_1$ equivalence relation. We prove that a direct product $\cR \times \cS$ of two ergodic p.m.p.\ equivalence relations is stable if and only if one of the two components $\cR$ or $\cS$ is stable. This result is deduced from a new local characterization of stable equivalence relations. The similar question on McDuff $\II_1$ factors is also discussed and some partial results are given.
\end{abstract}

\address{Laboratoire de Math\'ematiques d'Orsay\\ Universit\'e Paris-Sud\\ CNRS\\ Universit\'e Paris-Saclay\\ 91405 Orsay\\ FRANCE}

\author{Amine Marrakchi}
\email{amine.marrakchi@math.u-psud.fr}

\thanks{The author is supported by ERC Starting Grant GAN 637601}

\subjclass[2010]{37A20, 46L10, 46L36}

\keywords{stable equivalence relation; direct product; full group; McDuff factor; von Neumann algebra; tensor product; central sequence; maximality argument}

\maketitle


\section{Introduction}
An ergodic type $\II_1$ equivalence relation $\mathcal{R}$ is \emph{stable} if $\mathcal{R} \cong \cR \times \cR_0$ where $\cR_0$ is the unique hyperfinite ergodic type $\II_1$ equivalence relation. This notion was introduced and studied in \cite{JS87}, by analogy with its von Neumann algebraic counterpart \cite{McD69}. In particular, the following characterization of stability was obtained (see Page 3 for the notations): an ergodic type $\II_1$ equivalence $\cR$ is stable if and only if for every finite set $K \subset [[ \cR]]$ and every $\varepsilon > 0$, there exists $v \in [[\cR]]$ such that $v^2=0$, $vv^*+v^*v=1$ and
$$ \forall u \in K, \quad  \| vu-uv \|_2 < \varepsilon.$$
Our first theorem strengthens this characterization by showing that the condition $vv^*+v^*v=1$ can be removed, thus allowing $v$ to be arbitrarily small. 
\begin{letterthm} \label{local_stable}
An ergodic type $\II_1$ equivalence relation $\cR$ is stable if and only if for every finite set $K \subset [[\cR]]$ and every $\varepsilon > 0$, there exists $v \in [[\cR]]$ such that $v^2=0$ and
$$ \forall u \in K, \quad  \| vu-uv \|_2 < \varepsilon \|v\|_2. $$
\end{letterthm}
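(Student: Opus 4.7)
The forward implication is immediate from the JS87 characterization recalled in the introduction. Stability yields, for any finite $K$ and any $\varepsilon > 0$, a $v \in [[\cR]]$ with $v^2 = 0$, $vv^* + v^*v = 1$ and $\|vu - uv\|_2 < \varepsilon/\sqrt{2}$; the normalization $vv^* + v^*v = 1$ forces $\|v\|_2 = 1/\sqrt{2}$, so the estimate rewrites as $\|vu - uv\|_2 < \varepsilon \|v\|_2$.

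The reverse implication is the substantive direction, and my plan is to verify the JS87 criterion via a Rohlin-tower amplification. Fix $K \subset [[\cR]]$ finite and $\varepsilon > 0$, choose a large integer $N$ (to be determined), and exploit the ergodic pmp type $\II_1$ structure of $\cR$ to produce a projection $p \in A = \rL^\infty(X)$ of trace $1/N$ together with elements $\psi_1 = 1, \psi_2, \ldots, \psi_N$ of the full group $[\cR]$ such that the projections $\psi_i p \psi_i^{-1}$ are mutually orthogonal with $\sum_{i=1}^N \psi_i p \psi_i^{-1} = 1$. Enlarge the test set to $K' := K \cup \{\psi_i^{-1} u \psi_i : u \in K,\ 1 \le i \le N\}$, still finite. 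Assume temporarily that one has produced $v_0 \in [[\cR]]$ with $v_0^2 = 0$, $v_0^* v_0 + v_0 v_0^* = p$, and $\|v_0 w - w v_0\|_2 \le \varepsilon' \|v_0\|_2$ for all $w \in K'$, where $\varepsilon' < \varepsilon\sqrt{2/N}$. Setting
\[
V := \sum_{i=1}^N \psi_i v_0 \psi_i^{-1},
\]
the mutual orthogonality of the $\psi_i p \psi_i^{-1}$ forces $V \in [[\cR]]$, $V^2 = 0$, $V^* V + V V^* = 1$, and $\|V\|_2^2 = N\|v_0\|_2^2 = N \cdot \tau(p)/2 = 1/2$. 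The conjugation identity
\[
\psi_i v_0 \psi_i^{-1} u - u \psi_i v_0 \psi_i^{-1} = \psi_i\bigl(v_0 (\psi_i^{-1} u \psi_i) - (\psi_i^{-1} u \psi_i) v_0\bigr)\psi_i^{-1},
\]
combined with unitary invariance of $\|\cdot\|_2$, then yields
\[
\|Vu - uV\|_2 \le \sum_{i=1}^N \|v_0 (\psi_i^{-1} u \psi_i) - (\psi_i^{-1} u \psi_i) v_0\|_2 \le N \varepsilon' \|v_0\|_2 = \varepsilon'\sqrt{N/2} < \varepsilon,
\]
which is the JS87 criterion and hence delivers stability of $\cR$.

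The main obstacle is the localization step --- the production of $v_0 \in [[\cR]]$ supported in $p$ with $v_0^* v_0 + v_0 v_0^* = p$ and small relative commutators against $K'$. The hypothesis gives some nilpotent in $[[\cR]]$ but with no control over the location of its support, so one cannot directly pick a $v_0$ inside $p$. To circumvent this I would first establish an inheritance lemma: the local condition of the theorem passes to the restricted equivalence relation $\cR|_p$ for every positive-trace projection $p \in A$. This should follow from a transport argument using ergodicity --- a nilpotent $v \in [[\cR]]$ produced by the hypothesis is conjugated into $p$ by a single full-group element $\sigma \in [\cR]$ that moves the support of $v$ inside $p$, and the price paid in commutators with $u \in K'$ is controlled by applying the hypothesis to the enlarged finite set $K' \cup \{\sigma^{-1} u \sigma : u \in K'\}$ with a correspondingly smaller ratio. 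Granted the inheritance lemma, a countable-exhaustion argument inside $\cR|_p$, handling commutator budgets by taking an exponentially shrinking ratio at each stage, produces the desired $v_0$ with $v_0^*v_0 + v_0 v_0^* = p$; the subsequent amplification is then essentially bookkeeping.
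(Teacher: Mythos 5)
Your forward direction is correct: (ii) of the JS87 characterization trivially gives the local condition since $vv^*+v^*v=1$ forces $\|v\|_2 = 1/\sqrt2$.

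For the reverse direction, the high-level plan (build a nilpotent $v_0$ whose support is a small projection $p$, then amplify it to full support by conjugating and summing over a partition $\{\psi_i p\psi_i^{-1}\}$) is a sensible strategy, but two of the steps it rests on have genuine gaps.

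First, the proposed ``transport'' proof of the corner-inheritance lemma does not work. You want to conjugate a nilpotent $v$ produced by the hypothesis into $p$ via some $\sigma \in [\cR]$, and control $\|[\sigma v\sigma^{-1},u]\|_2 = \|[v,\sigma^{-1}u\sigma]\|_2$ by applying the hypothesis to the enlarged set $K' \cup \{\sigma^{-1}u\sigma\}$. But $\sigma$ depends on the support of $v$, which in turn depends on which finite test set you fed into the hypothesis; the argument is circular. Worse, the hypothesis gives no control on $\|v\|_2$, so $v$ may well have $\tau(vv^*+v^*v) > \tau(p)$, in which case no $\sigma \in [\cR]$ can move its support inside $p$. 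The paper instead proves inheritance by \emph{contraposition}: if a corner $\cR_Y$ failed condition (iii), one would have a quadratic-form bound $\|v\|_2^2 \le \kappa\sum_{u\in K}\|vu-uv\|_2^2$ for nilpotents in the corner, and cutting a global $v$ by $p=1_Y$ and by a finite family $S\subset[[\cR]]$ with $\sum_{w\in S}w^*w=p^\perp$, $ww^*\le p$, propagates this bound to the whole relation, contradicting (iii). That argument has no circularity and makes no attempt to relocate a given nilpotent.

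Second, the ``countable-exhaustion argument with exponentially shrinking ratio'' is stated too loosely to bear the weight you put on it, and it skips the crux. When you patch a new nilpotent $w$ onto the already-built $v$, the new piece lives in a corner $q^\perp M q^\perp$ with $q=vv^*+v^*v$, so the smallness of $[w,u]$ comes from applying the hypothesis \emph{inside that corner} to the corner-cut elements $q^\perp u q^\perp$ --- which are no longer partial isometries and which change at every stage. Simply shrinking $\varepsilon$ geometrically does not account for how the test elements drift. The paper's maximality argument handles this by replacing each $u_k$ with an auxiliary $U_k$ that is required to commute with $q$, and by tracking the cumulative $\|U_k-u_k\|_1 \le \delta\|v\|_1$ along the Zorn chain; the Cauchy--Schwarz bound $\|U_k'-U_k\|_1 \le 8\varepsilon\|w\|_2^2 = \delta\|w\|_1$ at each patching step is what makes the chain closeable and the final commutator against the original $u_k$ controllable. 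Without some analogue of that bookkeeping, the exhaustion argument does not close. I would encourage you to replace the transport step with the contrapositive argument and to make the patching explicit via a maximal element of a suitable poset.
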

As an application of Theorem \ref{local_stable}, we obtain the following rigidity result:
\begin{letterthm} \label{main_eq}
Let $\cR$ and $\cS$ be two ergodic type $\II_1$ equivalence relations. Then the product equivalence relation $\cR \times \cS$ is stable if and only if $\cR$ is stable or $\cS$ is stable.
\end{letterthm}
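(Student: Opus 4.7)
The easy direction is formal: if $\cR$ is stable, then $\cR \cong \cR \times \cR_0$ and hence $\cR \times \cS \cong (\cR \times \cS) \times \cR_0$, so $\cR \times \cS$ is stable; the $\cS$-stable case is symmetric.

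For the converse I would argue by contrapositive, with Theorem~\ref{local_stable} as the key ingredient, since it recasts both stability and non-stability as quantitative statements about square-zero partial isometries that can be matched against each other. Assume neither $\cR$ nor $\cS$ is stable. Theorem~\ref{local_stable} then produces a finite $K_\cR \subset [[\cR]]$ and $\varepsilon_\cR > 0$ such that every $v \in [[\cR]]$ with $v^2 = 0$ satisfies $\max_{u \in K_\cR}\|vu-uv\|_2 \geq \varepsilon_\cR \|v\|_2$, and symmetrically one has $(K_\cS, \varepsilon_\cS)$ for $\cS$. Assuming $\cR \times \cS$ stable, the same theorem, applied with $K_\cR \cup K_\cS$ (embedded diagonally into $[[\cR \times \cS]]$) and with a tolerance $\delta > 0$ to be tuned, yields a $v \in [[\cR \times \cS]]$ with $v^2 = 0$ and $\max_{u \in K_\cR \cup K_\cS}\|vu-uv\|_2 < \delta \|v\|_2$; the program is to show that such a $v$ cannot exist once $\delta$ is small enough relative to $\varepsilon_\cR$ and $\varepsilon_\cS$.

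The main technical step is to extract from $v$ a square-zero partial isometry in $[[\cR]]$ or $[[\cS]]$ that retains the approximate commutation, thereby violating one of the single-factor obstructions. Realizing $v$ as a partial isomorphism $(x,y) \mapsto (\alpha(x,y), \beta(x,y))$ on a measurable $A \subset X \times Y$, the hypothesis $v^2 = 0$ says that domain and image are disjoint in $X \times Y$. I would split $A$ into the \emph{vertical} region $A_\cS = \{\alpha(x,y) = x\}$, the \emph{horizontal} region $A_\cR = \{\beta(x,y) = y\} \setminus A_\cS$, and the \emph{mixed} remainder $A_{\rm mix}$. On $A_\cS$ the element $v$ disintegrates over $X$ as a measurable field of square-zero partial isometries $v_x \in [[\cS]]$, and the approximate commutation of $v$ with $K_\cS$ descends by Fubini to the fibers as $\int \max_{u \in K_\cS}\|v_x u - u v_x\|_2^2\,dx \lesssim \delta^2 \|v\|_2^2$; the obstruction $(K_\cS, \varepsilon_\cS)$ applied pointwise in $x$ then forces $\|v|_{A_\cS}\|_2 \lesssim (\delta/\varepsilon_\cS)\|v\|_2$. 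The symmetric argument on $A_\cR$ gives $\|v|_{A_\cR}\|_2 \lesssim (\delta/\varepsilon_\cR)\|v\|_2$.

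The principal obstacle is the mixed region $A_{\rm mix}$, where both coordinates move simultaneously and $v|_{A_{\rm mix}}$ is neither a field of $[[\cR]]$-elements nor of $[[\cS]]$-elements in any direct way. To handle it I would deploy the maximality argument flagged in the paper's keywords: among measurable sub-isometries $w \leq v|_{A_{\rm mix}}$ that, after a measurable twist by $\alpha$ or $\beta$ on a single fiber, become square-zero partial isometries in $[[\cR]]$ or $[[\cS]]$, pick a maximal one; at a maximum either an extracted piece of positive mass already contradicts one of the single-factor obstructions, or else the mixed region is exhausted up to negligible measure and one is reduced to the pure cases above. Combining the three estimates yields an inequality of the form $\|v\|_2 \leq C\delta \|v\|_2$ with $C = C(\varepsilon_\cR, \varepsilon_\cS, |K_\cR|, |K_\cS|)$, contradicting $v \neq 0$ for $\delta < 1/C$. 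I expect the technical core of the proof to lie in (i) pinpointing the correct measurable twist that converts a mixed partial isomorphism in $[[\cR \times \cS]]$ into a single-factor one, and (ii) verifying that the relation $v^2 = 0$ survives each extraction; both reduce to careful bookkeeping of the sets $A$, $\alpha(A)$, $\beta(A)$ and their mutual intersections.
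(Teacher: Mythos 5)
The easy direction and the overall strategy (contrapositive via the local characterization) match the paper. But the technical core is missing, and the decomposition you propose does not succeed.

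Where your argument departs from the paper and where it breaks: you split the \emph{domain} of $v$ into a vertical set $A_\cS$ (only the $Y$-coordinate moves), a horizontal set $A_\cR$ (only the $X$-coordinate moves), and a mixed remainder $A_{\mathrm{mix}}$ (both coordinates move). On the first two pieces the disintegration and the single-factor obstruction do the work, as you say. But $A_{\mathrm{mix}}$ is not a small error term — a generic $v\in[[\cR\times\cS]]$ is entirely mixed (e.g.\ any nontrivial elementary tensor $v_\cR\times v_\cS$ with both factors moving), so the entire difficulty lives there. The maximality argument you sketch for it does not close: you suggest "twisting" a mixed sub-isometry into $[[\cR]]$ or $[[\cS]]$, but any such twist that forces one coordinate to be fixed (replacing $(x,y)\mapsto(\alpha,\beta)$ by $(x,y)\mapsto(\alpha,y)$, say) changes the graph and there is no reason the resulting element should still approximately commute with $K_\cR\cup K_\cS$, nor that it remains square-zero; and you offer no mechanism by which extracting vertical/horizontal pieces could ever exhaust a set that is, by definition, disjoint from both.

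The paper's actual key step is not a decomposition of the domain but a decomposition of the \emph{commutator}. Write $p=v^*v$; since $v^2=0$, one has $\widehat{v}=\widehat{[v,p]}$, and the product structure gives a telescoping identity
\[
\widehat{[v,p]}(x,x',y,y')=\widehat{v}(x,x',y,y')\bigl(\widehat{p}(x,y)-\widehat{p}(x',y)\bigr)+\widehat{v}(x,x',y,y')\bigl(\widehat{p}(x',y)-\widehat{p}(x',y')\bigr),
\]
i.e.\ $\widehat{v}=\xi_1+\xi_2$ where $\xi_1$ is a fibered $\cR$-commutator and $\xi_2$ a fibered $\cS$-commutator (Lemma~\ref{split}). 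This splits \emph{every} $v$, mixed or not, into a pure-$\cR$ contribution and a pure-$\cS$ contribution; each is then bounded via the non-stability obstruction of the corresponding factor. Note also that the obstruction must be taken in the form (iv) of Theorem~\ref{local_stable_det} (with $\|vv^*-v^*v\|_2$ on the right) rather than (iii), because the fibers $v_1(y,y')\in[[\cR]]$ and $v_2(x,x')\in[[\cS]]$ appearing in this decomposition are \emph{not} square-zero in general; the paper compensates by rewriting each fiberwise commutator $[v_1(y,y'),p_1(y)]$ using $pv=0$ so that it is controlled by $\|v_1v_1^*-v_1^*v_1\|_2$. Your proposal applies (iii) to the fibers and therefore implicitly assumes $v_x^2=0$, which holds on $A_\cS$ but is precisely what you cannot arrange on $A_{\mathrm{mix}}$. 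So the missing idea is the telescoping commutator identity of Lemma~\ref{split}, together with the use of condition (iv) instead of (iii) at the fiber level.
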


As we said before, the study of stable equivalence relations was inspired by its von Neumann algebraic counterpart: the so-called \emph{McDuff} property. Recall that a $\II_1$ factor $M$ is called McDuff if $M \cong M \ovt R$ where $R$ is the hyperfinite $\II_1$ factor. In \cite{McD69}, it is shown that a $\II_1$ factor $M$ is McDuff if and only if for every finite set $K \subset M$ and every $\varepsilon > 0$, there exists $v \in M$ such that $v^2=0$, $vv^*+v^*v=1$ and
$$ \forall a \in K, \quad \|va-av\|_2 < \varepsilon.$$
Similarly to the equivalence relation case, we can strengthen this characterization by removing the condition $vv^*+v^*v=1$ and we obtain the following analog of Theorem \ref{local_stable}.
\begin{letterthm} \label{local_mcduff}
A type $\II_1$ factor $M$ is McDuff if and only if for every finite set $K \subset M$ and every $\varepsilon > 0$, there exists $x \in M$ such that $x^2=0$ and
$$\forall a \in K, \quad \| xa-ax\|_2 < \varepsilon \| x \|_2. $$
\end{letterthm}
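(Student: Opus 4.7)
The easy direction is immediate: if $M \cong M \ovt R$, pick any nilpotent $e \in R$ with $e^2 = 0$ (e.g.\ a matrix unit $e_{12}$ in a $2 \times 2$ matrix subalgebra) and set $x := 1 \otimes e$; then $x^2 = 0$ and $x$ commutes with $M \otimes 1$, verifying the weak condition trivially.

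For the converse, the plan is to mimic the maximality argument presumably driving the proof of Theorem \ref{local_stable}, upgrading the weak condition to McDuff's classical characterization: for every finite self-adjoint $K \subset M$ and every $\varepsilon > 0$, to produce $v \in M$ with $v^2 = 0$, $v^*v + vv^* = 1$, and $\|[v, a]\|_2 < \varepsilon$ for all $a \in K$. Fix $\varepsilon_0 \ll \varepsilon$ and consider families $(v_i)_{i \in I}$ of nonzero partial isometries with $v_i^2 = 0$, pairwise orthogonal support projections $e_i := v_i^* v_i + v_i v_i^*$, and $\|[v_i, a]\|_2 \leq \varepsilon_0 \|v_i\|_2$ for every $a \in K$. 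Zorn yields a maximal such family; set $v := \sum_i v_i$ and $e := \sum_i e_i$. Orthogonality of supports forces $v^2 = 0$ and $v^*v + vv^* = e$, while the decomposition $[v_i, a] = [v_i, e_i a e_i] + v_i a e_i^\perp - e_i^\perp a v_i$ displays each commutator as three pieces living in the pairwise $L^2$-orthogonal subspaces $e_i M e_i$, $e_i M e_i^\perp$, $e_i^\perp M e_i$. Summing across $i$ in these orthogonal subspaces, and using the estimate $\|[e_i, a]\|_2 = O(\varepsilon_0 \|v_i\|_2)$ that follows from $[e_i, a] = [v_i^* v_i + v_i v_i^*, a]$ together with $K = K^*$, one obtains $\|[v, a]\|_2 < \varepsilon$.

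It remains to prove $e = 1$. Suppose $q := 1 - e \neq 0$; the goal is to produce a new nilpotent partial isometry in $qMq$ to contradict maximality. Apply the weak hypothesis in $M$ to $K \cup \{q\}$ with tolerance $\varepsilon' \ll \varepsilon_0$ to obtain $x \in M$ with $x^2 = 0$ approximately commuting with both $q$ and $K$. The corner element $y := qxq \in qMq$ approximately commutes with $K$ (absorbing $O(\|[x, q]\|_2)$ cross-terms) and satisfies $\|y^2\|_2 \leq \|x\|_\infty \|[x, q]\|_2$, so $y$ is only approximately nilpotent. Converting $y$ into a genuine nonzero nilpotent partial isometry $v' \in qMq$ with enough $L^2$-mass and inherited approximate centrality is the main technical obstacle, and the step I expect to demand the most care. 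I would proceed via a spectral truncation $y \mapsto yp$ with $p := \chi_{|y| \leq C}(|y|)$: a key observation is that if $y$ were truly nilpotent one would have $yp = p'y$ for some projection $p' \perp p$ arising via the polar decomposition, so such cutoffs preserve nilpotency exactly; one chooses $C$ to balance operator-norm control against retention of $L^2$-mass, and the polar part of $yp$ yields the desired $v'$, provided the defect from $y^2 \neq 0$ is absorbed into the small parameter $\varepsilon'$. Once $v'$ is constructed, adding it to the family contradicts maximality, so $e = 1$; the resulting $v$ witnesses the classical McDuff condition, and Theorem \ref{local_mcduff} follows from \cite{McD69}.
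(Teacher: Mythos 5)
Your high-level plan is on target: a Zorn/maximality argument, reducing to corners of $M$, and converting the arbitrary nilpotent $x$ of the weak hypothesis into a nilpotent partial isometry. You have also correctly identified where the difficulty lies. But the spot you flag as the ``main technical obstacle'' is exactly where the proposal breaks down, and the fix you sketch will not work as described.

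The problem is that a single spectral cutoff $p = \chi_{[0,C]}(|y|)$ (or the polar part of $yp$) need not approximately commute with $K$ even when $y$ does: spectral projections are not norm-continuous (or Lipschitz) functions of the operator, so $\|[y,a]\|_2$ small does not control $\|[p,a]\|_2$ or $\|[u_t(y),a]\|_2$ for a fixed $t$. The paper addresses this by \emph{averaging over all cutoff levels}: Lemma \ref{integrate} (from Connes and Connes--St{\o}rmer) gives
$\int_0^\infty \| u_{t^{1/2}}(x)a - a u_{t^{1/2}}(x)\|_2^2 \, \mathrm{d}t \leq 4\|xa-ax\|_2\,\|xa+ax\|_2$
together with the mass identity $\int_0^\infty \|u_{t^{1/2}}(x)\|_2^2\, \mathrm{d}t = \|x\|_2^2$, so some $t$ simultaneously keeps $L^2$-mass and small commutators. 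This is the ingredient that turns ``arbitrary nilpotent $x$'' into ``nilpotent partial isometry $v$'' (i.e.\ condition $(\rm iii)'\Rightarrow(\rm iii)$ in the paper's Theorem \ref{local_mcduff_det}), and it is not a routine truncation. Without it your step ``the polar part of $yp$ yields the desired $v'$'' has no justification. Relatedly, the corner-inheritance step in the paper is not proved by simply adding $q$ to the test set; it goes through the equivalent condition $(\rm iv)'$ (which again depends on the same averaging lemma) and a careful corner estimate involving a Popa-style ``spreading'' family $S$ of partial isometries moving $p^\perp$ under $p$.

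A second, smaller gap: in your Zorn family the bound $\|[v,a]\|_2 < \varepsilon$ does not simply follow from orthogonality of supports. The cross terms $v_i a e_i^\perp$ and $e_i^\perp a v_i$ do not recombine into an $L^2$-orthogonal family once you sum over $i$, because the $e_i^\perp$ all overlap. The paper's maximality argument avoids this by carrying along modified reference elements $U_k$ that commute \emph{exactly} with the accumulated support projection $q = v^*v + vv^*$, are close to $u_k$ in $\|\cdot\|_1$, and satisfy $\|[v,U_k]\|_2 \leq \varepsilon\|v\|_2$; the key order relation $\|U_k' - U_k\|_1 \leq \delta(\|v'\|_1 - \|v\|_1)$ keeps this bookkeeping coherent across the Zorn chain. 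Your sketch omits that mechanism, and the naive three-piece decomposition will not survive the summation. So while your overall strategy mirrors the paper's, the two steps you would need to make precise --- the integrated-cutoff lemma and the $U_k$-tracking in the maximality argument --- are precisely the places where the real work happens, and neither is supplied.
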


In regard of this result and the similarity between the theory of stable equivalence relations and the theory of McDuff factors, we strongly believe that the following analog of Theorem \ref{main_eq} shoud be true.

\begin{letterconj} \label{main}
Let $M$ and $N$ be type $\II_1$ factors. Then $M \ovt N$ is McDuff if and only if $M$ is McDuff or $N$ is McDuff.
\end{letterconj}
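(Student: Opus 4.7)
The easy direction is routine: if, say, $M$ is McDuff then $M \cong M \ovt R$, hence $M \ovt N \cong (M \ovt R) \ovt N \cong (M \ovt N) \ovt R$, so $M \ovt N$ is McDuff. All the difficulty lies in the converse.

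The plan for the hard direction is to mimic, in the von Neumann algebra setting, the strategy used for Theorem \ref{main_eq}: combine the local characterization provided by Theorem \ref{local_mcduff} with a ``localization'' procedure that extracts a nilpotent, approximately central element of $M$ or of $N$ from one of $M \ovt N$. Concretely, assume $M \ovt N$ is McDuff and fix finite sets $K_M \subset M$, $K_N \subset N$ together with $\varepsilon > 0$. Applying Theorem \ref{local_mcduff} to $M \ovt N$ with $K := K_M \cup K_N$ viewed in $M \ovt N$, one obtains $x \in M \ovt N$ with $x^2 = 0$, $\|x\|_2 = 1$ and $\|xa-ax\|_2 < \varepsilon$ for every $a \in K$. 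I would try to convert $x$ into a nilpotent element $y \in M$ (resp.\ $z \in N$) that is still approximately central for $K_M$ (resp.\ $K_N$), via a singular value decomposition of $x \in L^2(M) \ootimes L^2(N)$, say $x \simeq \sum_i s_i\, \xi_i \otimes \eta_i$ with orthonormal $(\xi_i) \subset L^2(M)$ and $(\eta_i) \subset L^2(N)$. The approximate commutation with $K_M$ forces the family $(\xi_i)$ to be collectively centralizing for $K_M$ (weighted by the $s_i$) and symmetrically for $(\eta_i)$; one then attempts to show that the quadratic constraint $x^2 = 0$ forces a nilpotence structure on one of the two sides, either by pairing the singular value decomposition with a carefully chosen partial isometry in the relevant factor, or by iterating a maximality argument (as hinted by the keyword list) on the orthogonal support projections $vv^*$, $v^*v$ obtained from the polar decomposition $x = v|x|$.

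The hardest step, and the essential content of the conjecture, is to handle the possibility that a nontrivial nilpotent asymptotically central sequence in $M \ovt N$ is genuinely \emph{entangled} across the two factors in a way that cannot be decoupled into pieces living in each factor separately. Any linear slicing --- be it a conditional expectation $E_M$ or a rank-one slice $(\id \otimes \phi)$ for $\phi \in N_*$ --- destroys the relation $x^2 = 0$ since nilpotence is quadratic. Equivalently, rephrased at the level of ultrapowers, one must prove that if $(M \ovt N)' \cap (M \ovt N)^\omega$ is non-abelian then so is $M' \cap M^\omega$ or $N' \cap N^\omega$; while the trivial inclusion $(M' \cap M^\omega) \ovt (N' \cap N^\omega) \subseteq (M \ovt N)' \cap (M \ovt N)^\omega$ is immediate, proving the reverse inclusion is a well-known open problem. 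I expect this to be the bottleneck: without a Cartan subalgebra in which to cut $x$ into adapted pieces (as one does in $[[\cR \times \cS]]$ using the full pseudogroup), the available tools reduce to ultrapower analysis and spectral decomposition, and it is not at all clear whether these suffice in the full generality of the conjecture.
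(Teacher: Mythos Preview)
Your assessment is accurate: the statement labeled \ref{main} in the paper is a \emph{conjecture}, not a theorem, and the paper does not prove it. You have correctly located the obstruction. The argument for equivalence relations (Theorem \ref{main_eq}) works because every $v \in [[\cR \times \cS]]$ decomposes, via Lemma \ref{split}, as a measurable function from $\cR$ into $[[\cS]]$ (and symmetrically), so that the commutator $[v,v^*v]$ --- which equals $v$ when $v^2=0$ --- splits fiberwise into pieces controlled by the non-stability constants of $\cR$ and of $\cS$. For a general tensor product $M \ovt N$ there is no analogous fibration into partial isometries, and, as you say, any linear slice $(\id \otimes \phi)$ or conditional expectation destroys the quadratic relation $x^2=0$. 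The paper states this explicitly in the introduction: ``the proof of Theorem \ref{main_eq} does not admit a straightforward generalization to the von Neumann algebraic case.''

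What the paper offers instead are partial results obtained by a \emph{different} technique, not via the local characterization of Theorem \ref{local_mcduff}. Theorems \ref{abelian} and \ref{factor} both rest on Lemma \ref{Haagerup} (an Ioana--Vaes ultraproduct isometry, inspired by a trick of Haagerup), which transfers $N$-central vectors in $\rL^2((M \ovt N)^\omega)$ into $\rL^2((A \ovt N)^\omega)$ for an auxiliary abelian $A$; once one tensor factor is abelian one can argue pointwise over its spectrum. This yields the conjecture under the extra hypothesis that $M_\omega \subset A^\omega$ for some abelian $A \subset M$ (Theorem \ref{abelian}), or that $(M \ovt N)_\omega$ is a factor (Theorem \ref{factor}). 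Your proposed nilpotent-central-sequence approach is not the route taken for these partial results, and the full conjecture is left open --- precisely at the entanglement bottleneck you identified.
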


Even though the proof of Theorem \ref{main_eq} does not admit a straightforward generalization to the von Neumann algebraic case, we can still provide some partial solutions to Conjecture \ref{main} by using a different approach. The first one strengthens \cite[Theorem 2.1]{WY14}.
\begin{letterthm} \label{abelian}
Let $M$ be a non-McDuff $\II_1$ factor and suppose that there exists an abelian subalgebra $A \subset M$ such that $M_\omega \subset A^\omega$. Then for every $\II_1$ factor $N$ we have that $M \ovt N$ is McDuff if and only if $N$ is McDuff.
\end{letterthm}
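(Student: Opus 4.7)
The ``if'' direction is immediate: if $N$ is McDuff then $N \cong N \ovt R$, so $M \ovt N \cong (M \ovt N) \ovt R$ is McDuff. For the converse, assume $M \ovt N$ is McDuff; I want to show $N$ is McDuff. Applying Theorem \ref{local_mcduff} to $M \ovt N$, I choose an increasing sequence $K_n$ of finite subsets of $M \ovt N$ with weakly dense union and $\varepsilon_n \to 0$, and pick $x_n \in M \ovt N$ satisfying $x_n^2 = 0$, $\|x_n\|_2 = 1$, and $\|x_n a - a x_n\|_2 < \varepsilon_n$ for every $a \in K_n$. The sequence $(x_n)$ defines an element $x \in (M \ovt N)^\omega$ with $x^2 = 0$, $\|x\|_2 = 1$, and $x \in (M \ovt N)' \cap (M \ovt N)^\omega$.

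The heart of the argument is to locate $x$ inside $A^\omega \ovt N_\omega$, where $A^\omega$ and $N_\omega$ are embedded in $(M \ovt N)^\omega$ as sequences $(a_k \otimes 1)$ and $(1 \otimes n_k)$, respectively. Since $x$ commutes with $M \otimes 1$, I would establish the structural identification
\[
M' \cap (M \ovt N)^\omega = M_\omega \ovt N^\omega
\]
through a slicing argument: fixing an orthonormal basis $(e_i)$ of $L^2(N)$, the matrix coefficients of a representing sequence $(x_n)$ of $x$ yield $M$-central sequences in $M$, each of which lies asymptotically in $A$ by the hypothesis $M_\omega \subset A^\omega$; reassembling places $x$ in $A^\omega \ovt N^\omega$. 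The additional commutation of $x$ with $1 \otimes N$ then confines $x$ to $A^\omega \ovt (N' \cap N^\omega) = A^\omega \ovt N_\omega$.

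To conclude, abelianness of $A^\omega \cong L^\infty(\Omega, \mu)$ gives $A^\omega \ovt N_\omega \cong L^\infty(\Omega, N_\omega)$. The element $x$ corresponds to a measurable essentially bounded function $\omega \mapsto x(\omega) \in N_\omega$ with $x(\omega)^2 = 0$ almost everywhere and $x(\omega) \neq 0$ on a set of positive measure; picking any such $\omega_0$ produces a nonzero $2$-nilpotent $x(\omega_0) \in N_\omega$, and the ``only if'' direction of Theorem \ref{local_mcduff} applied to $N$ forces $N$ to be McDuff. The main obstacle is the slicing step that, combined with $M_\omega \subset A^\omega$, shows that any $M$-central sequence in $M \ovt N$ is asymptotically absorbed by $A \ovt N$; this is where the abelian subalgebra hypothesis genuinely intervenes. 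Everything else in the plan is a formal consequence of this identification together with Theorem \ref{local_mcduff}.
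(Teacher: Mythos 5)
Your overall strategy — reduce to an abelian-times-$N$ picture and then disintegrate over the abelian part — is in the same spirit as the paper, but the pivotal structural claim in the middle of your argument is not correct, and the matrix-coefficient slicing you propose does not establish what you need. You assert the identification
\[
(M\otimes 1)'\cap (M\ootimes N)^\omega \;=\; M_\omega \ootimes N^\omega .
\]
If this were true for $\II_1$ factors $M,N$, then intersecting with $(1\otimes N)'$ and using the commutation theorem would give $(M\ootimes N)_\omega = M_\omega\ootimes N_\omega$, which would immediately prove Conjecture~\ref{main} with no hypothesis on $M$ whatsoever. Since that conjecture is open, the identity cannot hold (and indeed the inclusion $\rL^2(M^\omega)\subsetneq \rL^2(M)^\omega$ pointed out just before Lemma~\ref{Haagerup} is exactly the obstruction). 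Your proposed justification — decompose $x_n$ along an orthonormal basis $(e_i)$ of $\rL^2(N)$ and argue that each coefficient sequence is $M$-central and hence asymptotically in $A$ — fails for a concrete reason: the coefficients $\langle x_n, \cdot\otimes e_i\rangle$ live in $\rL^2(M)$, not in $M$, they need not be uniformly bounded, and the asymptotic absorption into $A$ is not uniform over $i$, so ``reassembling'' into $A^\omega\ootimes N^\omega$ is unjustified. This is precisely the gap that Lemma~\ref{Haagerup} (the Ioana--Vaes reworking of Haagerup's averaging trick) is designed to bridge.

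What the hypothesis $M_\omega\subset A^\omega$ actually buys you, via Proposition~\ref{prop_central_1}, is the weaker containment $(M\ootimes N)_\omega\subset (A\ootimes N)_\omega$ — note $(A\ootimes N)_\omega$, not $A^\omega\ootimes N_\omega$; the latter is strictly smaller in general, so ``confining $x$ to $A^\omega\ootimes N_\omega$'' also overshoots. Once you are in $(A\ootimes N)_\omega$ you cannot simply identify it with $\rL^\infty(\Omega, N_\omega)$ and pick a point $\omega_0$: a representing sequence $x_n\in \rL^\infty(T,N)$ only satisfies the centrality and nilpotency relations on average over $t\in T$, with the good set of $t$'s depending on both $n$ and the test element, so one needs the measure-theoretic extraction the paper carries out (choosing $t_k$ and $n_k$ simultaneously via a diagonal argument over finite subsets of $N$). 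Finally, a small remark on your framing: invoking Theorem~\ref{local_mcduff} at both ends is a nice idea, but it is not needed — the paper works directly with non-commutativity of the asymptotic centralizers; and since the extraction step only gives a pair of non-commuting central sequences in $N$ (not an exact $2$-nilpotent), the route through Theorem~\ref{local_mcduff} for $N$ is actually harder to close than showing $N_\omega$ non-abelian directly.
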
 
Up to the author's knowledge, all concrete examples of non-McDuff factors in the literature do satisfy the assumption of Theorem \ref{abelian} (in fact, this is how we show that they are not McDuff). Deciding whether or not this property holds for all non-McDuff factors is an interesting open question.

The second result solves Conjecture \ref{main} under the additional assumption that $(M \ovt N)_\omega$ is a factor.

\begin{letterthm} \label{factor}
Let $M$ and $N$ be type $\II_1$ factors and suppose that $(M \ovt N)_\omega$ is a factor. Then both $M_\omega$ and $N_\omega$ are factors. In particular, if $M \ovt N$ is McDuff, then $M$ is McDuff or $N$ is McDuff.
\end{letterthm}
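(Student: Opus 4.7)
The first assertion reduces to the following structural claim: the natural $\ast$-homomorphism $\iota_M \colon M_\omega \to (M \ovt N)_\omega$ defined by $(x_n)_\omega \mapsto (x_n \otimes 1)_\omega$ sends $Z(M_\omega)$ into $Z((M \ovt N)_\omega)$, and the symmetric statement holds for $\iota_N$. Granting this, if $(M \ovt N)_\omega$ is a factor then $\iota_M(Z(M_\omega)) \subset \C 1$, and since $\iota_M$ is trace-preserving and hence injective, $Z(M_\omega) = \C$; similarly $Z(N_\omega) = \C$.

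That $\iota_M$ is well defined and takes values in $(M \ovt N)_\omega$ is routine: for $(x_n)$ asymptotically central in $M$ and any $a \in M$, $b \in N$, $\|[x_n \otimes 1, a \otimes b]\|_2 = \|[x_n,a]\|_2 \|b\|_2 \to 0$ along $\omega$, which extends to all of $M \ovt N$ by linearity and $L^2$-density. The substantive point is that $\iota_M(z)$ commutes with every $y \in (M \ovt N)_\omega$ when $z \in Z(M_\omega)$. Choose an orthonormal basis $(b_j)_{j \geq 0}$ of $L^2(N)$ consisting of bounded elements of $N$, with $b_0 = \hat 1$; fix bounded representatives $z = (z_n)_\omega$ and $y = (y_n)_\omega$, and write the $L^2$-expansion $y_n = \sum_j m_{n,j} \otimes b_j$ with Fourier coefficients $m_{n,j} \in M$ uniformly $\|\cdot\|_\infty$-bounded for each fixed $j$. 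Since each slice map $y \mapsto m_{n,j}$ is $M$-bimodular and $L^2$-contractive, $\|[m_{n,j}, a]\|_2 \to 0$ along $\omega$ for every $a \in M$, so $(m_{n,j})_\omega \in M_\omega$; by centrality of $z$, $\lim_\omega \|[z_n, m_{n,j}]\|_2 = 0$ for every $j$. Parseval gives $\|[z_n \otimes 1, y_n]\|_2^2 = \sum_j \|[z_n, m_{n,j}]\|_2^2$, and the main technical step is upgrading this pointwise decay in $j$ to the summable decay $\lim_\omega \sum_j \|[z_n, m_{n,j}]\|_2^2 = 0$; this is where the asymptotic commutation of $(y_n)$ with $1 \otimes N$ is exploited, providing the uniform $L^2$-tightness of the $N$-expansion along $\omega$ needed to exchange limit and sum.

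The in-particular statement follows by a dichotomy. If $M \ovt N$ is McDuff then $(M \ovt N)_\omega$ contains a copy of $R$ by McDuff's characterization and, combined with the factor hypothesis, is therefore a $\II_1$ factor. By the first assertion $M_\omega$ and $N_\omega$ are factors; any tracial factor is either $\C$ (when the parent is full) or contains a copy of $R$ (when the parent is McDuff). If neither $M$ nor $N$ is McDuff then $M_\omega = N_\omega = \C$, i.e., both $M$ and $N$ are full, and by the classical fact that the tensor product of two full $\II_1$ factors is full, $(M \ovt N)_\omega = \C$, contradicting the presence of $R$. Hence $M$ or $N$ must be McDuff. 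The principal obstacle in the whole proof is the uniform-in-$j$ control required in the summable-decay step; the rest is assembly.
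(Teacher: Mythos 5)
Your overall scheme—embed $\mathcal{Z}(M_\omega)$ into $\mathcal{Z}((M\ovt N)_\omega)$ via $z\mapsto z\otimes 1$ and then run the fullness dichotomy for the McDuff statement—is the same as the paper's (the embedding result is its Lemma~\ref{hypercentral}, and the dichotomy is identical). The calculations leading up to the Parseval identity $\|[z_n\otimes 1,y_n]\|_2^2=\sum_j\|[z_n,m_{n,j}]\|_2^2$ and the fact that each $(m_{n,j})_\omega\in M_\omega$ are correct. The problem is the step you yourself flag as the crux.

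The claim that asymptotic commutation of $(y_n)$ with $1\otimes N$ gives ``uniform $L^2$-tightness of the $N$-expansion along $\omega$'' is false. Take $N$ McDuff and $y_n=1\otimes v_n$ with $(v_n)$ a central sequence of unitaries in $N$ tending weakly to $0$; then $(y_n)$ is central in $M\ovt N$ and commutes with $1\otimes N$, yet $\sum_{j>J}\|m_{n,j}\|_2^2=\|v_n-P_Jv_n\|_2^2\to 1$ for every fixed $J$, so there is no tightness. (In that example the conclusion $\|[z_n\otimes 1,y_n]\|_2\to 0$ happens to hold for trivial reasons, but your proposed mechanism does not deliver it.) The deeper obstruction to a direct Fourier-expansion argument is that the coefficients $m_{n,j}$ are only $\|\cdot\|_\infty$-bounded by $\|b_j\|_\infty$, which is unbounded in $j$; so one cannot feed them into a uniform commutation estimate coming from hypercentrality of $z$. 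What the paper does instead avoids exactly this: (1) McDuff's Lemma~10 gives a \emph{uniform} rate of commutation of a hypercentral sequence against all elements of the unit ball that are $1/k$-approximately central; (2) the Ioana--Vaes/Haagerup averaging trick (Lemma~\ref{Haagerup}) replaces $M$ by an abelian algebra $A=\rL^\infty(T,\mu)$, so that $y_n\in A\ovt N=\rL^\infty(T,N)$ disintegrates as $t\mapsto y_n(t)$ with \emph{uniform} bound $\|y_n(t)\|_\infty\le 1$, and the asymptotic commutation with $1\otimes N$ forces $y_n(t)$ to be approximately central for most $t$. The abelianization is precisely the device that turns an unbounded Fourier expansion into a bounded disintegration, and without something playing that role the summable-decay step remains a genuine gap rather than an assembly detail.
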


Theorem \ref{factor} also has an interesting application which is not related to Conjecture \ref{main}. It provides examples of McDuff $\II_1$ factors that do not admit any McDuff decomposition in the sense of \cite{HMV16}. This result uses and strengthens \cite[Theorem 4.1]{Po10}.

\begin{lettercor} \label{rigid}
Let $M=\overline{\bigotimes}_{n \in \N} M_n$ be an infinite tensor product of non-Gamma type $\II_1$ factors $M_n, \: n \in \N$. Let $N$ be a type $\II_1$ factor such that $ M \cong N \ovt R$. Then $M \cong N$.
\end{lettercor}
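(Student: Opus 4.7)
The strategy is to combine Theorem~\ref{factor} with a strengthening of \cite[Theorem 4.1]{Po10}. The first step is to establish that $M_\omega = M' \cap M^\omega$ is a factor; this is the strengthening of Popa's theorem referred to in the statement, and it is where the non-Gamma hypothesis on each $M_n$ plays its essential role. The intuition is that any putative non-trivial central element of $M_\omega$ would, after approximation by finite tensor blocks, have to be localized in some tail $\bigovt_{n \geq k} M_n$, and the non-Gamma property of each $M_n$ (propagated through the tail) would eventually force it to be scalar.

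Given the isomorphism $M \cong N \ovt R$, the factoriality of $(N \ovt R)_\omega \cong M_\omega$ allows an application of Theorem~\ref{factor}: both $N_\omega$ and $R_\omega$ are factors. Since $N \ovt R$ is McDuff, Theorem~\ref{factor} also tells us that $N$ or $R$ must be McDuff, but $R$ is trivially McDuff, so this alone does not yet pin down $N$. The corollary will follow once we prove that $N$ itself is McDuff, since then $N \cong N \ovt R \cong M$.

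The hard step is precisely this upgrade, from factoriality of $N_\omega$ to McDuff-ness of $N$. My approach would be to exploit the concrete description of $M_\omega$ coming from the strengthened version of Popa's Theorem~4.1 in order to relocate the hyperfinite central sequences supplied by the decomposition $M = N \ovt R$ into $N$ itself. More precisely, the second tensor factor provides an inclusion $R^\omega \subset N' \cap M^\omega$, and the rigidity of the infinite tensor product $M = \bigovt_n M_n$ should allow one to ``realign'' this copy of $R$ within $N$ (by a maximality or intertwining argument of Popa type), thereby producing the desired embedding $R \hookrightarrow N_\omega = N' \cap N^\omega$.

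The main obstacle is this realignment step: Theorem~\ref{factor} on its own is insufficient, as the ``or $R$ is McDuff'' alternative it offers is always satisfied, and closing the gap requires genuinely using the rigidity of the factorization $M = \bigovt_n M_n$ of non-Gamma factors. Everything else is bookkeeping around Popa's structural description of $M_\omega$.
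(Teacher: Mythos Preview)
Your architecture is right: show $M_\omega$ is a factor, apply Theorem~\ref{factor} to get that $N_\omega$ is a factor, then argue $N$ is McDuff so that $N \cong N \ovt R \cong M$. But you have misidentified where the difficulty lies and what \cite[Theorem~4.1]{Po10} contributes.

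The step you call ``hard'' --- passing from ``$N_\omega$ is a factor'' to ``$N$ is McDuff'' --- is in fact a one-line combination of two ingredients already on the table. Popa's \cite[Theorem~4.1]{Po10} gives directly that $N$ has property Gamma, i.e.\ $N_\omega \neq \C$. (This is what that theorem says; it is not a statement about $M_\omega$.) Since you already know from Theorem~\ref{factor} that $N_\omega$ is a factor, it is a nontrivial factor, hence noncommutative, hence $N$ is McDuff. No realignment or intertwining is needed; the ``strengthening'' of Popa's theorem advertised before the corollary is precisely this upgrade from Gamma to McDuff, and it is the \emph{conclusion}, not an input.

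A smaller issue with your Step~1 intuition: you say a nontrivial element of $\mathcal Z(M_\omega)$ would be localized in a tail and then ``the non-Gamma property \dots\ would eventually force it to be scalar.'' The non-Gamma property of the finite blocks $Q_n = M_0 \ovt \cdots \ovt M_n$ does push any central sequence into $Q_n' \cap M$, but it certainly does not force it to be scalar --- $M$ is McDuff, so $M_\omega$ is enormous. The paper's argument is different: given a nontrivial central sequence $(x_k)$, one finds $n_k \to \infty$ with $\|E_{Q_{n_k}' \cap M}(x_k)\|_2 \geq \tfrac12$, and then chooses $y_k$ in the unit ball of $Q_{n_k}' \cap M$ with $\|[x_k,y_k]\|_2 \geq \tfrac14$. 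Since $n_k \to \infty$, the sequence $(y_k)$ is itself central, witnessing that $(x_k) \notin \mathcal Z(M_\omega)$.
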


Before we end this introduction, let us say a few words about the methods used to obtain these results. The proof of Theorem \ref{local_stable} (and Theorem \ref{local_mcduff}) is based on a so-called \emph{maximality} argument. This technique consists in patching "microscopic" elements satisfying a given property in order to obtain a "macroscopic" element satisfying this same property. The name \emph{maximality} is a reference to Zorn's lemma which is used in the patching procedure. Maximality arguments in the theory of von Neumann algebras were initiated in \cite{MvN43}. Since then, they have been used fruitfully in many of the deepest results of the theory, reaching higher and higher levels of sofistication in \cite{Co75b}, \cite{CS76}, \cite{Co85}, \cite{Ha85}, \cite{Po85} and culminating in the \emph{incremental patching} method of \cite{Po87}, \cite{Po95}, \cite{Po14}. See also \cite{Ma16}, \cite{HMV17} and \cite{Ma17} for other recent applications of maximality arguments. On the other hand, the proofs of Theorem \ref{abelian} and Theorem \ref{factor} are based on a completely different technique which appears in \cite{IV15} and which is inspired by an averaging trick of Haagerup \cite{Ha84}. By using this technique, one can reduce some problems on arbitrary tensor products $M \ovt N$ to the much easier case where one of the two algebras is abelian. This very elementary transfer principle is surprisingly powerful and Theorem \ref{abelian} and Theorem \ref{factor} are two applications among many others.

\subsection*{Acknowledgments} It is our pleasure to thank Cyril Houdayer and Sorin Popa for their valuable comments. We also thank Yusuke Isono for the thought-provoking discussions we had.

\subsection*{Notations}
For simplicity, in this paper, all probability spaces are standard and all von Neumann algebras have separable predual (except ultraproducts). We fix some non-principal ultrafilter $\omega \in \beta \N \setminus \N$ once and for all. We denote by $\rL(\cR)$ the von Neumann algebra of a p.m.p.\ equivalence relation $\cR$ (see \cite{FM75}).We denote by $[\cR]$ (resp.\ $[[\cR]]$) its full group (resp.\ full pseudo-group) and we will identify them with the corresponding unitaries (resp.\ partial isometries) in the von Neumann algebra $\rL(\cR)$. In particular if $v,w \in [[\cR]]$, then $\|v-w\|_2$ refers to the $2$-norm of $\rL(\cR)$.

\tableofcontents

\section{A local characterization of stable equivalence relations}

In this section, we establish the following more precise version of Theorem \ref{local_stable}. The proof is inspired by \cite[Theorem 2.1]{Co75b} and \cite[Theorem 2]{Co85}.

\begin{thm} \label{local_stable_det}
Let $\cR$ be an ergodic p.m.p.\ equivalence relation on a probability space $(X,\mu)$. Then the following are equivalent:
\begin{itemize}
\item [$(\rm i)$] $\cR$ is stable.
\item [$(\rm ii)$]  For every finite set $K \subset [[\cR]]$ and every $\varepsilon > 0$, there exists $v \in [[\cR]]$ such that 
$$ v^2=0,$$
$$vv^*+v^*v=1,$$
$$ \forall u \in K, \quad  \| vu-uv \|_2 < \varepsilon. $$
\item [$(\rm iii)$]  For every finite set $K \subset [[\cR]]$ and every $\varepsilon > 0$, there exists $v \in [[\cR]]$ such that
$$ v^{2}=0,$$
$$ \forall u \in K, \quad  \| vu-uv \|_2 < \varepsilon \|v\|_2. $$
\item [$(\rm iv)$]  For every finite set $K \subset [[\cR]]$ and every $\varepsilon > 0$, there exists $v \in [[\cR]]$ such that
$$ \forall u \in K, \quad  \| vu-uv \|_2 < \varepsilon \|vv^*-v^*v\|_2. $$
\end{itemize}
\end{thm}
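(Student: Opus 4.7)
The plan is to close the circle via the implications (ii) $\Rightarrow$ (iii) $\Rightarrow$ (iv) $\Rightarrow$ (iii) $\Rightarrow$ (ii), together with the classical equivalence (i) $\Leftrightarrow$ (ii) from \cite{JS87}. Three of the arrows are essentially free. The implication (ii) $\Rightarrow$ (iii) is immediate since $v^{2}=0$ and $vv^{*}+v^{*}v=1$ force $\|v\|_{2}^{2}=1/2$. The implication (iii) $\Rightarrow$ (iv) is automatic: for $v \in [[\cR]]$ a partial isometry with $v^{2}=0$, the projections $p = vv^{*}$ and $q = v^{*}v$ are orthogonal in $L^{\infty}(X)$, hence $\|p-q\|_{2} = \sqrt{2}\,\|v\|_{2}$, and (iii) translates directly into (iv).

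For (iv) $\Rightarrow$ (iii), given $v$ satisfying (iv), I would form the ``cut'' $w := vf$, where $f := q(1-p) \in L^{\infty}(X)$. Since $v = pv$ and $fp = 0$, one has $fv = fpv = 0$, so $w^{2} = vfvf = 0$ automatically. A short computation gives $\|w\|_{2}^{2} = \tau(f) = \tfrac{1}{2}\|p-q\|_{2}^{2}$. Splitting $wu - uw = v(fu - uf) + (vu - uv)f$ and estimating $\|fu - uf\|_{2}$ through $\|pu - up\|_{2}$ and $\|qu - uq\|_{2}$, each controlled in turn by $\|vu-uv\|_{2} + \|vu^{*} - u^{*}v\|_{2}$, one obtains $\|wu - uw\|_{2} \leq C\,\|p-q\|_{2}$ for an absolute constant $C$, after enlarging $K$ to be closed under adjoints. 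This is exactly the scaled bound required by (iii), up to the constant.

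The substance of the theorem lies in the implication (iii) $\Rightarrow$ (ii), which I would prove by a maximality argument in the spirit of \cite{Co75b, Co85}. Fix $K$ and $\varepsilon > 0$, and let $\mathcal V$ be the set of $v \in [[\cR]]$ with $v^{2}=0$ and $\|vu-uv\|_{2} \leq \varepsilon/2$ for every $u \in K$, partially ordered by extension: $v \preceq v'$ iff $v' - v$ is a nilpotent partial isometry whose source and range are orthogonal to those of $v$. Every chain in $\mathcal V$ has monotonically growing supports bounded by $1$, so Zorn's lemma furnishes a maximal element $v_{0} \in \mathcal V$; write $p_{0} = v_{0}v_{0}^{*}$, $q_{0} = v_{0}^{*}v_{0}$ and $r_{0} = 1 - p_{0} - q_{0}$. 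Assume for contradiction that $\tau(r_{0}) > 0$. Applying (iii) to $\cR$ with test set $K \cup \{v_{0}, v_{0}^{*}, r_{0}\}$ and a sufficiently small parameter produces a nilpotent $w \in [[\cR]]$ that approximately commutes with $r_{0}$; the cut $r_{0}wr_{0}$, after polar decomposition and a small perturbation, yields a genuine nilpotent $w' \in [[\cR|_{r_{0}}]] \subset [[\cR]]$ supported in $r_{0}$ with $\|w'u - uw'\|_{2}$ as small as desired in absolute value. Then $v_{0} + w' \in \mathcal V$ strictly extends $v_{0}$, contradicting maximality. Hence $r_{0} = 0$, i.e.\ $v_{0}v_{0}^{*} + v_{0}^{*}v_{0} = 1$, so $v_{0}$ realizes (ii).

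The main technical obstacle is the cutting step at the end. A priori $r_{0}wr_{0}$ is neither an exact partial isometry nor exactly nilpotent, so one must combine a polar decomposition in $\rL(\cR)$ with a perturbation argument to extract $w' \in [[\cR|_{r_{0}}]]$ while preserving $L^{2}$-size and the approximate commutation with $K$. The delicate numerical bookkeeping --- ensuring that the commutator of $w'$ with each $u \in K$ stays safely below $\varepsilon/2$ so that $v_{0} + w'$ truly belongs to $\mathcal V$ --- dictates how small the parameter in (iii) must be chosen, but it is ultimately forced by the scaling built into (iii) itself. Once this is in place, the loop is closed by (ii) $\Rightarrow$ (i) from \cite{JS87}.
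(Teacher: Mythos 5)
The equivalences other than $(\mathrm{iii}) \Rightarrow (\mathrm{ii})$ are handled correctly and along essentially the same lines as the paper: for $(\mathrm{iv}) \Rightarrow (\mathrm{iii})$ your cut $w = vf$ with $f = v^*v(1-vv^*)$ is the same element as the paper's $w = v(1-vv^*)$, and the norm identity $\|vv^*-v^*v\|_2 = \sqrt{2}\,\|w\|_2$ is correct.

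The implication $(\mathrm{iii}) \Rightarrow (\mathrm{ii})$ is where your argument breaks down. You define $\mathcal V$ by the \emph{absolute} bound $\|vu-uv\|_2 \le \varepsilon/2$ and order it by extension of partial isometries. The problem is at the maximality step: if $v_0$ is maximal with $r_0 = 1 - v_0 v_0^* - v_0^* v_0 \neq 0$ and you produce some nonzero nilpotent $w'$ supported under $r_0$ with $\|w'u-uw'\|_2$ small, you then claim $v_0 + w' \in \mathcal V$. But $\|(v_0+w')u-u(v_0+w')\|_2 \le \|[v_0,u]\|_2 + \|[w',u]\|_2$, and $\|[v_0,u]\|_2$ may already equal $\varepsilon/2$ at a maximal element, so adding \emph{any} $w'$ with nonzero commutator overshoots. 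Nothing in your setup prevents this: the set $\mathcal V$ is inductive, hence Zorn does give a maximal element, but you cannot contradict its maximality. Switching to the scaled bound $\|vu-uv\|_2 \le \varepsilon\|v\|_2$ does not fix this on its own, because the naive triangle inequality gives $\|[v_0+w',u]\|_2 \le \varepsilon(\|v_0\|_2 + \|w'\|_2)$, which exceeds $\varepsilon\|v_0+w'\|_2 = \varepsilon\sqrt{\|v_0\|_2^2+\|w'\|_2^2}$ unless the two commutators $[v_0,u]$ and $[w',u]$ are orthogonal.

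That orthogonality is exactly the idea you are missing, and it is the heart of the paper's argument. The paper does not track the original elements $u_k$ through the induction. Instead it introduces approximants $U_k$ required to commute with the support projection $q = vv^*+v^*v$; this makes the commutators of successive orthogonal pieces orthogonal in $\rL^2$, so that the scaled bounds add in quadrature and the ratio $\|[v,U_k]\|_2 / \|v\|_2 \le \varepsilon$ is preserved exactly. The cost is a drift $\|U_k - u_k\|$, which the paper controls in the $\|\cdot\|_1$ norm via Cauchy--Schwarz: the perturbation needed to make $U_k$ commute with the new support $p$ is of size $O(\|p\|_2\,\|[U_k,p]\|_2) = O(\|w\|_1)$, which is summable and bounded by $\delta\|v\|_1$. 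This bookkeeping (the pair $(v, U_1,\dots,U_n)$, the $\|\cdot\|_1$ drift budget proportional to $\|v\|_1$, and the commutation $[U_k,q]=0$) is precisely what makes the Zorn argument close, and it is absent from your proposal. A secondary, more minor issue: the paper needs a separate preliminary step showing that $(\mathrm{iii})$ passes to corners $\cR_Y$ (proved by contraposition using ergodicity), which you replace with a vaguer cut-and-perturb of $r_0 w r_0$; this also has potential pitfalls (the cut could be small even if $w$ almost commutes with $r_0$) but is secondary to the main gap above.
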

\begin{proof}
The equivalence $(\rm i) \Leftrightarrow (\rm ii)$ is proved in \cite{JS87}. The implications $ (\rm ii) \Rightarrow (\rm iii) \Rightarrow (\rm iv)$ are clear.

First, we show that $(\rm iv) \Rightarrow (\rm iii)$. Let $K=K^* \subset [[\cR]]$ be a finite symmetric set and $\varepsilon > 0$. Choose $v \in [[\cR]]$ such that 
$$ \forall u \in K, \quad  \| vu-uv \|_2 < \varepsilon \|vv^*-v^*v\|_2. $$
Let $w=v(1-vv^*) \in [[\cR]]$. Then $w^2=0$. Moreover, a simple computation shows that  
$$ \forall u \in K, \quad  \| wu-uw \|_2  < 3\varepsilon \|vv^*-v^*v\|_2$$
and we have $\|vv^*-v^*v\|_2=\|ww^*-w^*w\|_2=\sqrt{2}\|w\|_2$. Hence we obtain
$$ \forall u \in K, \quad  \| wu-uw \|_2  < 3\sqrt{2}\varepsilon \|w\|_2.$$

Next, we prove that if $\cR$ satisfies condition $(\rm iii)$ then every corner of $\cR$ also satisfies it. Let $Y \subset X$ be a non-zero subset and $p=1_Y$. Suppose that the corner $\cR_Y$ doest not satisfy $(\rm iii)$. Then we can find a finite set $K \subset p[[\cR]]p$ and a constant $\kappa > 0$ such that for all $v \in p[[\cR]]p$ with $v^2=0$, we have
\[ \| v\|_2^2 \leq \kappa \sum_{u \in K} \|vu-uv\|_2^2. \]
Since $\cR$ is ergodic, we can find a finite set $S \subset [[\cR]]$ such that $\sum_{w \in S} w^*w=p^\perp$ and $ww^* \leq p$ for all $w \in S$. Then, for every $v \in [[\cR]]$, we have
\[ \|v\|_2^2=\|pv\|_2^2+\sum_{ w \in S} \|wv\|_2^2\]
and for all $w \in S$, we have
\[ \|wv\|_2^2 \leq 2( \|wv-vw\|_2^2+\|vw\|_2^2) \leq 2( \|wv-vw\|_2^2+\|vp\|_2^2).\]
Hence, we obtain
\[ \|v\|_2^2 \leq 2 |S| (\|vp\|_2^2+\|pv\|_2^2) + 2\sum_{w \in S} \|wv-vw\|_2^2. \]
Moreover, we have
\[ \|pv\|_2^2+\|vp\|_2^2 = \|pv-vp\|_2^2 + 2 \|pvp\|_2^2, \]
hence
\[ \|v\|_2^2 \leq 2|S| \|pv-vp\|_2^2 +4 |S| \|pvp\|_2^2+ 2\sum_{w \in S} \|wv-vw\|_2^2. \]
Since $(pvp)^2 =0$, we know, by assumption, that
\[ \|pvp\|_2^2 \leq \kappa \sum_{u \in K} \| (pvp)u-u(pvp)\|_2^2 \leq \kappa \sum_{u \in K} \|vu-uv\|_2^2.\]
Therefore, we finally obtain
\[ \|v\|_2^2 \leq 2 |S| \|pv-vp\|_2^2 + 4|S| \kappa \sum_{u \in K} \|vu-uv\|_2^2 + 2 \sum_{w \in S} \|wv-vw\|_2^2. \]
This shows that $\cR$ does not satisfy $(\rm iii)$.

Finally, we use a maximality argument to show that $(\rm iii) \Rightarrow (\rm ii)$. Let $u_1,\dots,u_n \in [[\cR]]$ be a finite family and let $\varepsilon> 0$ and $\delta =8 \varepsilon$. Consider the set $ \Lambda$ of all $(v,U_1,\dots,U_n) \in [[\cR]]^{n+1}$ such that
\begin{itemize}
\item $v^2=0$.
\item $[U_k,vv^{*}+v^{*}v]=0$ for all $k=1,\dots,n$.
\item $ \|vU_k-U_kv\|_2 \leq \varepsilon \| v \|_2$ for all $k=1,\dots,n$.
\item $\| U_k-u_k\|_1 \leq \delta \|v\|_1$.
\end{itemize}
On $\Lambda$ put the order relation given by
\[ (v,U_1,\dots,U_n) \leq (v',U'_1,\dots,U'_n) \]
if and only if $v \leq v'$ and $\|U_k'-U_k\|_1\leq \delta ( \|v'\|_1-\|v\|_1)$ for all $k=1,\dots,n$. Then $\Lambda$ is an inductive set (because $[[\mathcal{R}]]$ is inductive and is also complete for the distance given by $\| \cdot \|_1$). By Zorn's lemma, let $v \in \Lambda$ be a maximal element. Suppose that $q=vv^*+v^*v \neq 1$. Since, by the previous step, all corners of $\cR$ also satisfy $(\rm iii)$, we can find a non-zero element $w \in q^{\perp}[[\cR]] q^{\perp}$, with $w^2=0$ such that
\[ \| wU_k-U_kw \|_2 \leq \varepsilon \|w\|_2 \]
\[\| wU_k^*-U_k^*w \|_2 \leq \varepsilon \|w\|_2 \]
for all $k=1, \dots, n$. 

Now, let 
\begin{itemize}
\item $p:=ww^*+w^*w$
\item $U'_k:=pU_kp + p^{\perp}U_kp^{\perp}$
\item $v':=v+w$
\item $q':=v'(v')^*+(v')^*v'=q+p$
\end{itemize}
Note that $(v')^2=0$ and $[U_k',q']=0$ for all $k$. We also have
\[ \|v'U'_k-U'_kv'\|_2^2 \leq \|vU_k-U_kv\|_2^2+\|wU_k-U_kw\|_2^2 \leq \varepsilon^2 \|v\|_2^2+\varepsilon^2 \|w\|_2^2=\varepsilon^2 \|v'\|_2^2. \]
Moreover, by Cauchy-Schwarz inequality, we have
\begin{align*}
\| U_k'-U_k \|_1  & \leq \| pU_kp^{\perp}\|_1 + \|p^{\perp}U_kp\|_1 \\
& \leq \|p\|_2(\| p U_kp^{\perp}\|_2+\|p^{\perp}U_kp\|_2) \\
& \leq \sqrt{2} \|p\|_2 \| [U_k,p] \|_2 \\
& \leq 2 \sqrt{2}\|p\|_2 ( \| [U_k,w] \|_2 + \| [U_k,w^* ] \|_2) \\
& \leq 4 \sqrt{2} \varepsilon \|p\|_2 \|w\|_2 \\
& = 8\varepsilon \|w\|_2^2 \\
& = \delta \|w\|_1.
\end{align*} 
Since $\|v'\|_1=\|v\|_1+\|w\|_1$, this implies that
\[ \|U'_k-U_k \|_1 \leq \delta ( \|v'\|_1-\|v\|_1) \]
and
\[ \|U'_k-u_k \|_1 \leq \| U'_k-U_k\|_1+\|U_k-u_k\|_1 \leq \delta \|v'\|_1.\]
Therefore $v' \in \Lambda$ and $v \leq v'$. This contradicts the maximality of $v$. Hence we must have $v^*v+vv^*=q=1$. Moreover, since
\[ \|vu_k-u_kv\|_2 \leq \|vU_k-U_k v \|_2 + 2\|U_k-u_k\|_2,\]
\[ \|vU_k-U_k v \|_2 \leq \varepsilon \]
and
\[ \|U_k-u_k\|_2^2 \leq 2 \|U_k-u_k\|_1 \leq 2 \delta=16 \varepsilon, \]
we conclude that
\[ \| vu_k-u_k v \|_2 \leq  \varepsilon +8 \sqrt{\varepsilon}. \]
Since such a $v$ exists for every $\varepsilon > 0$, we have proved $( \mathrm{ii})$.
\end{proof}

\section{Proof of Theorem \ref{main_eq}}
In this section, we prove Theorem \ref{main_eq}. We need to introduce some notations which will be useful in order to decompose elements of the full pseudo-group $[[\cR \times \cS]]$ as functions from $\cR$ to $[[\cS]]$.

Let $\cR$ be a p.m.p.\ equivalence relation on a probability space $(X,\mu)$. We denote by $\tilde{\mu}$ the canonical $\sigma$-finite measure on $\cR$ induced by $\mu$. Then $\rL^2(\cR):=\rL^2(\cR,\tilde{\mu})$ can be identified with the canonical $\rL^2$-space of $\rL(\cR)$. For every $x \in \rL(\cR)$, we denote by $\widehat{x}$ the corresponding vector in $\rL^2(\cR)$. If $v \in [[\cR]]$, then $\widehat{v}$ is just the indicator function of the graph of $v$. We denote by $\mathfrak{P}(X)$ the set of projections of $\rL^\infty(X,\mu)$. For every $p \in \mathfrak{P}(X)$, we can view $\widehat{p}$ as an indicator function in $\rL^2(X)$, where $\rL^2(X)$ is embedded into $\rL^2(\cR)$ via the diagonal inclusion.

If $\mathcal{S}$ is a second p.m.p.\ equivalence relation on $(Y,\nu)$, then for any $v \in [[\cR \times \cS]]$, there exists a unique function $v_{\cS} \in \rL^0(\cR, [[\cS]])$ which satisfies
$$ \widehat{v}(x,x',y,y')=\widehat{v_\cS(x,x')}(y,y')$$
for a.e.\ $(x,x',y,y') \in \cR \times \cS$. 

If $p \in \mathfrak{P}(X \times Y)$, then there exists a unique function $p_Y \in \rL^0(X,\mathfrak{P}(Y))$ such that
$$ \widehat{p}(x,y')=\widehat{p_Y(x)}(y)$$
for a.e.\ $(x,y) \in X \times Y$. 

All these heavy notations are needed for the following key lemma which allows us to decompose a commutator in $[[\cR \times \cS]]$ into two parts that we will be able to control independently. The proof is just an easy computation.

\begin{lem} \label{split}
Let $\cR$ and $\cS$ be two p.m.p.\ equivalence relations on $(X,\mu)$ and $(Y,\nu)$ respectively. Let $\cR \times \cS$ be the product p.m.p.\ equivalence relation on $(X \times Y, \mu \otimes \nu)$. Let $v \in [[\cR \times \cS]]$ and $p \in \mathfrak{P}(X \times Y)$. Let $v_1:=v_\cR \in \rL^0(\cS, [[\cR]])$ and $v_2:=v_\cS \in \rL^0(\cR, [[\cS]])$ the two functions defined by $v$. Let $p_1:=p_X \in \rL^0(Y,\mathfrak{P}(X))$ and $p_2:=p_Y \in \rL^0(X,\mathfrak{P}(Y))$ the two functions defined by $p$.

Define $\xi_1 \in \rL^2(\cS, \rL^2(\cR))$ by
\[ \xi_1(y,y')=\widehat{[v_1(y,y'),p_1(y)]}, \quad  \text{for a.e. } (y,y') \in \cS. \]
and $\xi_2 \in \rL^2(\cR,\rL^2(\cS))$ by
\[ \xi_2(x,x')=\widehat{[v_2(x,x'),p_2(x')]}, \quad  \text{for a.e. } (x,x') \in \cR. \]

Then, after identifying $\rL^2(\cS, \rL^2(\cR)) \cong \rL^2(\cR,\rL^2(\cS))\cong \rL^{2}(\cR \times \cS)$, we have $\widehat{[v,p]}=\xi_1+\xi_2$.
\end{lem}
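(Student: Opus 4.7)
The plan is to simply unwind the definitions and verify the identity by a direct computation, as the statement anticipates. First I would observe that since $p \in \mathfrak{P}(X \times Y)$ lies in the diagonal subalgebra $\rL^\infty(X \times Y) \subset \rL(\cR \times \cS)$, left and right multiplication by $p$ merely multiplies $\widehat{v}$ by the value of $p$ at the target or source coordinate. Thus for a.e.\ $(x,x',y,y') \in \cR \times \cS$,
\[
\widehat{[v,p]}(x,x',y,y') = \widehat{v}(x,x',y,y')\bigl(p(x',y') - p(x,y)\bigr).
\]

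Next I would telescope through the intermediate value $p(x',y)$:
\[
p(x',y') - p(x,y) = \bigl(p(x',y') - p(x',y)\bigr) + \bigl(p(x',y) - p(x,y)\bigr).
\]
Using the defining relations $\widehat{p_2(x)}(y) = p(x,y) = \widehat{p_1(y)}(x)$, the first bracket equals $\widehat{p_2(x')}(y') - \widehat{p_2(x')}(y)$ and the second equals $\widehat{p_1(y)}(x') - \widehat{p_1(y)}(x)$. The particular choice of intermediate point is what ensures the arguments $p_1(y)$ (rather than $p_1(y')$) and $p_2(x')$ (rather than $p_2(x)$) match those in the definitions of $\xi_1$ and $\xi_2$.

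Finally, combining these expansions with the defining relations $\widehat{v}(x,x',y,y') = \widehat{v_1(y,y')}(x,x') = \widehat{v_2(x,x')}(y,y')$ and the standard formula
\[
\widehat{[w,q]}(x,x') = \widehat{w}(x,x')\bigl(\widehat{q}(x') - \widehat{q}(x)\bigr)
\]
for the commutator of $w \in \rL(\cR)$ with a diagonal projection $q \in \rL^\infty(X)$ (and its analogue for $\cS$), I would identify the second summand of the telescoping with $\xi_1(y,y')(x,x')$ and the first with $\xi_2(x,x')(y,y')$ under the canonical identification $\rL^2(\cR \times \cS) \cong \rL^2(\cR, \rL^2(\cS)) \cong \rL^2(\cS, \rL^2(\cR))$. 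Adding gives $\widehat{[v,p]} = \xi_1 + \xi_2$. Since everything is symbolic manipulation, there is no real obstacle; the only point requiring care is the choice of intermediate value in the telescoping step so as to align with the precise definitions of $\xi_1$ and $\xi_2$.
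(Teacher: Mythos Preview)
Your proposal is correct and follows essentially the same approach as the paper: both compute $\widehat{[v,p]}$ pointwise as $\widehat{v}$ times a difference of values of $p$, telescope through the intermediate value $p(x',y)$, and identify the two resulting pieces with $\xi_1$ and $\xi_2$. The only cosmetic difference is that you write the commutator formula with the sign convention $\widehat{[w,q]}(x,x')=\widehat{w}(x,x')(\widehat{q}(x')-\widehat{q}(x))$, opposite to the paper's; since you apply the same convention consistently to both $\widehat{[v,p]}$ and the inner commutators defining $\xi_1,\xi_2$, the identity $\widehat{[v,p]}=\xi_1+\xi_2$ comes out the same.
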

\begin{proof}
For a.e.\ $(x,x',y,y') \in \cR \times \cS$, we compute
$$ (\xi_1(y,y'))(x,x')=\widehat{v}(x,x',y,y')(\widehat{p}(x,y)-\widehat{p}(x',y))$$
$$ (\xi_2(x,x'))(y,y')=\widehat{v}(x,x',y,y')(\widehat{p}(x',y)-\widehat{p}(x',y'))$$
$$ \widehat{[v,p]}(x,x',y,y')=\widehat{v}(x,x',y,y')(\widehat{p}(x,y)-\widehat{p}(x',y'))$$
hence the equality we want.
\end{proof}

\begin{proof}[Proof of Theorem \ref{main_eq}]
Clearly, if $\cR$ or $\cS$ is stable then $\cR \times \cS$ is also stable. Now, suppose that $\cR$ and $\cS$ are not stable. Then, by Theorem \ref{local_stable_det}, we can find a constant $\kappa_1 > 0$ and a finite set $K_1 \subset [[\cR]]$ such that for all $v \in [[\cR]]$ we have
\[ \| vv^*-v^*v \|_2^2 \leq \kappa_1 \sum_{u \in K_1} \|vu-uv\|_2^2. \]
Similarly, we can find a constant $\kappa_2 > 0$ and a finite set $K_2 \subset [[\cS]]$ such that for all $v \in [[\cS]]$ we have
\[ \| vv^*-v^*v \|_2^2 \leq \kappa_2 \sum_{u \in K_2} \|vu-uv\|_2^2. \]
In order to prove that $\cR \times \cS$ is not stable, we will show that for all $v \in [[\cR \times \cS]]$ with $v^2=0$, we have
\[ \| v \|_2^2 \leq \kappa \sum_{u \in K } \|vu-uv\|_2^2 \]
where $\kappa=2(\kappa_1+\kappa_2)$ and $K=(K_1 \otimes 1) \cup (1 \otimes K_2)$.

Indeed, let $v \in [[\cR \times \cS]]$ with $v^2=0$ and let $p=v^*v$. Using the notations of Lemma \ref{split}, we can write $\widehat{v}=\widehat{[v,p]}=\xi_1+\xi_2$ and we have the formulas
\[ \|\xi_1\|_2^2=\int_{\cS} \| v_1(y,y')p_1(y)-p_1(y)v_1(y,y') \|_2^2 \; \rd \nu_\ell(y,y'), \]
\[ \|\xi_2\|_2^2=\int_{\cR} \| v_2(x,x')p_2(x')-p_2(x')v_2(x,x') \|_2^2 \; \rd \mu_\ell(x,x'). \]
Since $pv=0$, then for a.e.\ $(y,y') \in \cS$ we have that $p_1(y)v_1(y,y')=0$, hence
\[ v_1(y,y')p_1(y)-p_1(y)v_1(y,y')=v_1(y,y')\left(v_1(y,y')^*v_1(y,y')-v_1(y,y')v_1(y,y')^*\right)p_1(y).  \]
This shows that
\begin{align*}
 \| v_1(y,y')p_1(y)-p_1(y)v_1(y,y') \|_2^2  & \leq \| v_1(y,y')^*v_1(y,y')-v_1(y,y')v_1(y,y')^*\|_2^2 \\ &\leq \kappa_1 \sum_{u \in K_1} \| v_1(y,y')u-uv_1(y,y')\|_2^2
\end{align*}
After integrating over $\cS$ and using the following formula 
\[ \forall u \in K_1, \; \| v(u \otimes 1)-(u \otimes 1)v\|_2^2=\int_{\cS} \| v_1(y,y')u-uv_1(y,y') \|_2^2 \; \rd \nu_\ell(y,y'), \]
we obtain
\[ \|\xi_1\|_2^2 \leq \kappa_1 \sum_{u \in K_1} \| v(u \otimes 1)-(u \otimes 1)v\|_2^2. \]

Similarly, since $vp=v$, we have $v_2(x,x')p_2(x')=v_2(x,x')$ for a.e\ $(x,x') \in \cR$, hence
\[ v_2(x,x')p_2(x')-p_2(x')v_2(x,x')=p_2(x')^{\perp}\left(v_2(x,x')v_2(x,x')^*-v_2(x,x')^*v_2(x,x')\right)v_2(x,x').  \]
Then, proceeding as before, one shows that
\[ \|\xi_2\|_2^2 \leq \kappa_2 \sum_{u \in K_2} \| v(1 \otimes u)-(1 \otimes u)v\|_2^2. \]
Finally, since $\widehat{v}=\widehat{[v,p]}=\xi_1+\xi_2$, we conclude that
\[ \|v\|_2^2 \leq 2 (\| \xi_1 \|_2^2+ \|\xi_2\|_2^2) \leq \kappa \sum_{u \in K} \|vu-uv\|_2^2 \]
as we wanted.
\end{proof}

\section{A local characterization of McDuff factors}

In this section, we establish Theorem \ref{local_mcduff}. The proof is more involved than the proof of Theorem \ref{local_stable}. We will need the following lemma (for a proof see \cite[Lemma 1.2.6]{Co75b}, \cite[Proposition 1]{CS76} and \cite[Theorem 2]{CS76}).

\begin{lem} \label{integrate}
Let $M$ be a von Neumann algebra. For every $x \in M$ and every $t \geq 0$, let $$u_t(x)=u1_{[t,+\infty)}(|x|)$$ where $x=u|x|$ is the polar decomposition of $x$.
\begin{enumerate}

\item For all $x \in M$ we have 
$$ \int_0^\infty \| u_{t^{1/2}}(x)\|_2^2 \; \rd t = \|x \|_2^2.$$
\item For all  $x,y \in M^+$, we have
$$\|x-y\|_2^2 \leq \int_0^\infty \| u_{t^{1/2}}(x)-u_{t^{1/2}}(y)\|_2^2 \; \rd t .$$ 
\item For all  $x \in M$ and all $a \in M^+$, we have
$$ \int_0^\infty \| u_{t^{1/2}}(x)a-au_{t^{1/2}}(x)\|_2^2 \; \rd t \leq 4 \|xa-ax\|_2 \| xa+ax\|_2.$$ 
\end{enumerate}
\end{lem}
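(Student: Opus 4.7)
Part (1) is a direct Fubini computation. Since the partial isometry $u$ in the polar decomposition $x = u|x|$ satisfies $u^*u$ equal to the support of $|x|$, which dominates each spectral projection $e_s := 1_{[s, \infty)}(|x|)$ for $s > 0$, one gets $u_{t^{1/2}}(x)^* u_{t^{1/2}}(x) = e_{t^{1/2}}$ and hence $\|u_{t^{1/2}}(x)\|_2^2 = \tau(e_{t^{1/2}})$. The scalar identity $\int_0^\infty 1_{[t^{1/2}, \infty)}(s) \, \rd t = s^2$ combined with Fubini (justified by nonnegativity) yields $\int_0^\infty \tau(e_{t^{1/2}}) \, \rd t = \tau(|x|^2) = \|x\|_2^2$.

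For part (2), the plan is to use the layer-cake representation $x = \int_0^\infty e_s(x) \, \rd s$ valid for positive $x$, so that with $A_s := e_s(x) - e_s(y) = u_s(x) - u_s(y)$ we have $x - y = \int_0^\infty A_s \, \rd s$. Expanding the $2$-norm and symmetrizing in the two variables gives
\[ \|x-y\|_2^2 = 2 \int_0^\infty \int_0^s \tau(A_t A_s) \, \rd t \, \rd s. \]
The crucial pointwise estimate is $\tau(A_t A_s) \leq \tau(A_s^2)$ for $s \geq t$, which follows from the nesting $e_s(x) \leq e_t(x)$ and $e_s(y) \leq e_t(y)$: direct expansion yields
\[ \tau(A_s^2) - \tau(A_t A_s) = \tau\bigl((e_t(x) - e_s(x))\, e_s(y)\bigr) + \tau\bigl((e_t(y) - e_s(y))\, e_s(x)\bigr), \]
and each summand is a trace of a product of two projections, hence nonnegative. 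Inserting the estimate and performing the $t$-integration produces $\|x-y\|_2^2 \leq \int_0^\infty 2s\, \tau(A_s^2) \, \rd s$, which becomes the stated bound after the change of variables $t = s^2$.

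Part (3) is by far the most delicate. The plan is to exploit the spectral integrals $x = \int_0^\infty u_s(x) \, \rd s$ and $x^* x = \int_0^\infty 2s\, e_s \, \rd s$ to express $[x, a]$, the anticommutator $\{x, a\} := xa + ax$, and the left-hand side $\int_0^\infty 2s \|[u_s(x), a]\|_2^2 \, \rd s$ as integrals in the same spectral variable. The Leibniz identities $[x^* x, a] = x^* [x, a] + [x^*, a] x$ and $[xx^*, a] = x[x^*, a] + [x, a] x^*$, together with the positivity of $a$, should let one rewrite the double integral obtained by expanding $\|[u_s(x), a]\|_2^2$ as a trace pairing of the form $\tau(B^* [x, a])$ with $\|B\|_2$ controlled by $\|\{x, a\}\|_2$. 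A final Cauchy-Schwarz then produces the factor $\|[x, a]\|_2 \|\{x, a\}\|_2$. The main obstacle I anticipate is the precise book-keeping of the cross-terms $\tau([u_s(x), a]^* [u_t(x), a])$ for $s \neq t$: these have no useful elementary individual bounds and must telescope in the right way after the spectral integration. I would follow closely the strategy of \cite[Lemma 1.2.6]{Co75b} and \cite[Theorem 2]{CS76}, where exactly this kind of manipulation is carried out.
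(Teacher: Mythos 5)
The paper itself does not prove Lemma~\ref{integrate}: it simply points the reader to \cite[Lemma 1.2.6]{Co75b} and \cite[Proposition 1, Theorem 2]{CS76}. So the fair comparison is against those classical arguments.

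Your proofs of parts (1) and (2) are correct and self-contained, and they essentially reconstruct the cited arguments. In (1), the identity $u_{t^{1/2}}(x)^*u_{t^{1/2}}(x)=1_{[t^{1/2},\infty)}(|x|)$ plus Fubini and the scalar identity $\int_0^\infty 1_{[t^{1/2},\infty)}(s)\,\rd t=s^2$ is exactly right. In (2), your key observation is the monotonicity $\tau(A_tA_s)\le\tau(A_s^2)$ for $s\ge t$, with the explicit expansion
\[
\tau(A_s^2)-\tau(A_tA_s)=\tau\bigl((e_t(x)-e_s(x))e_s(y)\bigr)+\tau\bigl((e_t(y)-e_s(y))e_s(x)\bigr)\ge 0,
\]
each term being a trace of a product of two projections. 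Combined with the symmetrization $\|x-y\|_2^2=2\int_0^\infty\!\int_0^s\tau(A_tA_s)\,\rd t\,\rd s$ and the change of variable $t=s^2$, this gives exactly the stated bound. This is a clean, correct rendering of the Powers--St\o rmer / Connes--St\o rmer type argument.

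Part (3), however, is not proved. What you offer there is a plan --- use the spectral integrals $x=\int_0^\infty u_s(x)\,\rd s$ and $x^*x=\int_0^\infty 2s\,e_s\,\rd s$, Leibniz identities, and a hoped-for rewriting as $\tau(B^*[x,a])$ followed by Cauchy--Schwarz --- and you candidly flag that you cannot control the off-diagonal terms $\tau([u_s(x),a]^*[u_t(x),a])$, which is indeed where the whole difficulty lies. You then defer to the very references the paper cites. That is an honest assessment, but it is not a proof: part (3) is the technically substantial claim in the lemma (it is precisely \cite[Lemma 1.2.6]{Co75b}), and the sketch as written does not demonstrate that the cross-terms telescope or cancel in the needed way. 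So the proposal is complete and correct for (1)--(2) and leaves a genuine gap at (3), albeit the same gap the paper itself chooses to fill by citation rather than proof.
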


Now, we can prove the following more precise version of Theorem \ref{local_mcduff}. Note that even if one is only interested in item $(\rm iii)$, one still needs first to prove that it is equivalent to $(\rm iv)'$.

\begin{thm} \label{local_mcduff_det}
Let $M$ be a factor of type $\II_1$ with separable predual. Then the following are equivalent:
\begin{itemize}
\item [$(\rm i)$] $M$ is McDuff.
\item [$(\rm ii)$]  For every finite set $F \subset M$ and every $\varepsilon > 0$, there exists a partial isometry $v \in M$ such that 
$$vv^*+v^*v=1,$$
$$ \forall a \in F, \quad  \| va-av \|_2 < \varepsilon. $$
\item [$(\rm iii)$]  For every finite set $F \subset M$ and every $\varepsilon > 0$, there exists a partial isometry $v \in M$ such that
$$v^2=0,$$
$$ \forall a \in F, \quad  \| va-av \|_2 < \varepsilon \|v\|_2.$$
\item [$(\rm iii)'$]  For every finite set $F \subset M$ and every $\varepsilon > 0$, there exists $x \in M$ such that
$$ x^{2}=0,$$
$$ \forall a \in F, \quad  \| xa-ax \|_2 < \varepsilon \|x\|_2. $$
\item [$(\rm iv)$]  For every finite set $F \subset M$ and every $\varepsilon > 0$, there exists a partial isometry $v \in M$ such that
$$ \forall a \in F, \quad  \| va-av \|_2 < \varepsilon \|vv^*-v^*v\|_2.$$
\item [$(\rm iv)'$]  For every finite set $F \subset M$ and every $\varepsilon > 0$, there exists $x \in M$ such that
$$ \forall a \in F, \quad  \|x\|_2\cdot \| xa-ax \|_2 < \varepsilon \||x|-|x^*|\|_2^2. $$
\end{itemize}
\end{thm}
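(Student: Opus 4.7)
My plan is to close the chain of implications, with the substantive work concentrated in $(\rm iv)' \Rightarrow (\rm iv)$ (a spectral extraction via Lemma \ref{integrate}) and $(\rm iv)' \Rightarrow (\rm ii)$ (a maximality argument modeled on Theorem \ref{local_stable_det}). The equivalence $(\rm i) \Leftrightarrow (\rm ii)$ is the classical McDuff characterization. The easy implications $(\rm ii) \Rightarrow (\rm iii) \Rightarrow (\rm iii)' \Rightarrow (\rm iv)'$ and $(\rm ii) \Rightarrow (\rm iv) \Rightarrow (\rm iv)'$ all follow from elementary computations: a partial isometry with $v^*v+vv^*=1$ forces $v^*v \perp vv^*$, hence $v^2=0$ and $\|v\|_2 = 1/\sqrt{2}$; and if $x^2=0$ with polar decomposition $x=u|x|$, then the range of $x$ lies in the kernel of $x$, so $uu^* \perp u^*u$, and hence $|x|$ and $|x^*|=u|x|u^*$ have orthogonal supports, giving $\||x|-|x^*|\|_2^2 = 2\|x\|_2^2$. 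The implication $(\rm iv) \Rightarrow (\rm iii)$ is copied verbatim from Theorem \ref{local_stable_det} by setting $w = v(1-vv^*)$.

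The key analytic step is $(\rm iv)' \Rightarrow (\rm iv)$, which I would obtain by integrating over the spectral cutoffs of $|x|$ and $|x^*|$. Let $v_t := u_{t^{1/2}}(x)$ be the family of partial isometries from the polar decomposition, so that $v_t^*v_t = 1_{[t^{1/2},\infty)}(|x|)$ and $v_tv_t^* = 1_{[t^{1/2},\infty)}(|x^*|)$. Lemma \ref{integrate} then delivers three estimates: $\int_0^\infty \|v_t\|_2^2 \, \rd t = \|x\|_2^2$ from item (1); $\int_0^\infty \|v_ta-av_t\|_2^2 \, \rd t \leq 4\|xa-ax\|_2\|xa+ax\|_2 \leq 8\|a\|\,\|xa-ax\|_2\,\|x\|_2$ for each $a$ from item (3); and $\int_0^\infty \|v_tv_t^*-v_t^*v_t\|_2^2 \, \rd t \geq \||x|-|x^*|\|_2^2$ from item (2) applied to the pair $|x|,|x^*| \in M^+$. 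Summing the middle estimate over a finite set $F$ and combining with the hypothesis of $(\rm iv)'$, an averaging argument in $t$ produces a single value at which $v_t$ verifies $(\rm iv)$ simultaneously for every $a \in F$. This is the main technical novelty, and it is precisely what justifies the otherwise mysterious quantity $\||x|-|x^*|\|_2^2$ in the statement of $(\rm iv)'$.

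For $(\rm iv)' \Rightarrow (\rm ii)$, I would reproduce the maximality argument of Theorem \ref{local_stable_det} but with $(\rm iv)'$ replacing $(\rm iii)$. The detour through $(\rm iv)'$ is necessary because for a general projection $p \in M$ and a nilpotent partial isometry $v \in M$, the cut-down $pvp$ is typically no longer nilpotent, since $v^*v$ and $vv^*$ do not lie in any canonical commuting subalgebra as they do in the equivalence-relation setting; condition $(\rm iv)'$ imposes no such algebraic constraint on $x$, and the same Cauchy-Schwarz manipulation as in Theorem \ref{local_stable_det} (using that in a $\II_1$ factor one may partition $p^\perp$ into finitely many subprojections each equivalent to a subprojection of $p$) shows that $(\rm iv)'$ passes to corners. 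One then applies Zorn's lemma to the same set $\Lambda$ of candidate tuples $(v,U_1,\dots,U_n)$; at a maximal element with $q := v^*v+vv^* \neq 1$, one applies $(\rm iv)'$ inside the corner $q^\perp M q^\perp$, passes through $(\rm iv)' \Rightarrow (\rm iv) \Rightarrow (\rm iii)$ to manufacture a nilpotent partial isometry $w$ in the corner, and enlarges $v$ to $v + w$, contradicting maximality.
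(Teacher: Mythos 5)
Your plan reproduces the skeleton of the paper's proof, and you correctly identify both the key analytic step $(\rm iv)' \Rightarrow (\rm iv)$ (your direct averaging is a valid variant of the paper's contrapositive argument) and the central obstruction, namely that $pvp$ need not be nilpotent in a factor, which is exactly why $(\rm iv)'$ is needed in the corner. However, there are two places where you claim an argument from Theorem~\ref{local_stable_det} carries over ``verbatim'' when in fact it does not, both traceable to the same phenomenon: in $[[\cR]]$ the source and range projections $v^*v$, $vv^*$ live in $\rL^\infty(X)$ and hence commute, while in a $\II_1$ factor they do not.

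First, for $(\rm iv) \Rightarrow (\rm iii)$ you set $w = v(1-vv^*)$ and assert $w$ witnesses $(\rm iii)$. But $w^*w = (1-vv^*)\,v^*v\,(1-vv^*)$ is a projection only when $v^*v$ and $vv^*$ commute; otherwise $w$ is merely an element with $w^2 = 0$, not a partial isometry, so it witnesses $(\rm iii)'$ but not $(\rm iii)$. This is precisely why the theorem has the extra intermediate condition $(\rm iii)'$ at all. The paper closes the gap $(\rm iii)' \Rightarrow (\rm iii)$ with a \emph{second} application of Lemma~\ref{integrate}: for $x$ with $x^2=0$, the spectral cutoffs $u_{t^{1/2}}(x)$ \emph{are} nilpotent partial isometries, and items~(1) and~(3) of the lemma let one pass from the $(\rm iii)'$ estimate on $x$ to the $(\rm iii)$ estimate on some $u_{t^{1/2}}(x)$. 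You use Lemma~\ref{integrate} only once (for $(\rm iv)' \Rightarrow (\rm iv)$), so the cycle as you describe it is not closed: inside the corner you need an actual nilpotent \emph{partial isometry} to add to $v$ in the maximality argument, and your route $(\rm iv)' \Rightarrow (\rm iv) \Rightarrow (\rm iii)$ stops short of producing one.

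Second, the corner-passing is not ``the same Cauchy-Schwarz manipulation.'' In Theorem~\ref{local_stable_det} the manipulation works because $(pvp)^2 = 0$, which gives $\||pvp| - |(pvp)^*|\|_2 = \sqrt{2}\,\|pvp\|_2$ for free. In the factor setting $pvp$ is no longer nilpotent, and one must supply a substitute estimate connecting $\|pvp\|_2$ to the $(\rm iv)'$ quantity $\||pvp| - |(pvp)^*|\|_2$. The paper does this by proving, \emph{using} $v^2=0$, the bound $\|pvp\|_2 \le \||pvp| - |(pvp)^*|\|_2 + 3\|pv - vp\|_2$, which is the genuinely new ingredient in the corner-passing step and has no analog in the ergodic-theoretic proof. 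Without this bound (or something like it), negating $(\rm iv)'$ in $pMp$ does not feed back into a norm estimate on $v$ in $M$, and the contradiction argument does not close. So the corner-passing needs one more idea than your outline acknowledges, and it is not a routine transcription.
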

\begin{proof}
The equivalence $(\rm i) \Leftrightarrow (\rm ii)$ is already known \cite{McD69}. First we show that $(\rm iii) \Leftrightarrow (\rm iii)' \Leftrightarrow (\rm iv) \Leftrightarrow (\rm iv)'$. For this, we will prove the following implications $(\rm iii) \Rightarrow (\rm iv)' \Rightarrow (\rm iv) \Rightarrow (\rm iii)' \Rightarrow (\rm iii)$.

$(\rm iii) \Rightarrow (\rm iv)'$. If $v$ satisfies $(\rm iii)$ then $x:=v$ also satisfies $(\rm iv)'$ since $ \| |x|-|x^*| \|_2 =\sqrt{2} \|x\|_2.$

$(\rm iv)' \Rightarrow (\rm iv)$. Suppose, by contradiction, that there exists a finite set $F \subset M$ and a constant $\kappa > 0$ such that for all partial isometries $v \in M$ we have
\[ \|vv^{*}-v^{*}v \|_2^2 \leq \kappa \sum_{ a \in F} \| va-av\|_2^2. \]
We can assume that $F \subset M^+$. Let $x \in M$. Then by Lemma \ref{integrate}, we have
\[ \| |x^*|-|x|\|_2^2 \leq \int_0^\infty \| u_{t^{1/2}}(|x^*|)-u_{t^{1/2}}(|x|) \|_2^2 \; \rd t \leq \kappa \sum_{a \in F} \int_0^\infty \|u_{t^{1/2}}(x)a-au_{t^{1/2}}(x) \|_2^2 \; \rd t. \] 
Since, for every $a \in F$, we have
\[ \int_0^\infty \|u_{t^{1/2}}(x)a-au_{t^{1/2}}(x) \|_2^2 \; \rd t \leq 4 \| x a-ax\|_2  \|xa+ax\|_2 \leq 8\| a\|_\infty \|x\|_2  \|xa-ax\|_2, \]
we obtain
\[ \| |x^*|-|x|\|_2^2  \leq 8\kappa  \left( \max_{a \in F} \|a\|_\infty\right) \|x\|_2 \sum_{a \in F} \|xa-ax\|_2 \]
and this contradicts $(\rm iv)'$.

$(\rm iv) \Rightarrow (\rm iii)'$. Let $F \subset M$ be a finite self-adjoint set and $\varepsilon > 0$. Pick $v \in M$ a partial isometry such that
$$ \forall a \in F, \quad  \| va-av \|_2 < \varepsilon \|vv^*-v^*v\|_2.$$
Let $x_1=(1-v^{*}v)v$ and $x_2=v(1-vv^{*})$. Note that $x_1^2=x_2^2=0$. Let $x:=x_1$ if $\|x_1\| \geq \|x_2 \|$ and $x:=x_2$ otherwise. Then we have
$$ \|vv^{*}-v^{*}v\|_2^{2}=\|x_1\|_2^{2}+\|x_2\|_2^{2} \leq 2 \|x\|_2^2.$$
Moreover, since $F$ is self-adjoint, we have
$$ \forall a \in F, \quad \|xa-ax\|_2 \leq 3 \| va-av\|_2. $$
Therefore, we obtain
$$ \forall a \in F, \quad  \| xa-ax \|_2 < 3 \sqrt{2}\varepsilon \|x\|_2. $$

$(\rm iii)' \Rightarrow (\rm iii)$. Suppose, by contradiction, that there exists a finite set $F \subset M$ and a constant $\kappa > 0$ such that for all partial isometries $v \in M$ with $v^2=0$ we have
\[ \|v \|_2^2 \leq \kappa \sum_{ a \in F} \| va-av\|_2^2. \]
We can assume that $F \subset M^+$. Let $x \in M$ such that $x^2=0$. Then for every $t > 0$, we have $u_t(x)^2=0$. Hence, by Lemma \ref{integrate}, we have
\[ \|x\|_2^2=\int_0^\infty \| u_{t^{1/2}}(x) \|_2^2 \; \rd t \leq \kappa \sum_{a \in F} \int_0^\infty \|u_{t^{1/2}}(x)a-au_{t^{1/2}}(x) \|_2^2 \; \rd t. \] 
Since, for every $a \in F$, we have
\[ \int_0^\infty \|u_{t^{1/2}}(x)a-au_{t^{1/2}}(x) \|_2^2 \; \rd t \leq 4 \| x a-ax\|_2  \|xa+ax\|_2 \leq 8\| a\|_\infty \|x\|_2  \|xa-ax\|_2, \]
we obtain
\[ \|x \|_2 \leq 8 \left( \max_{a \in F} \|a\|_\infty\right) \kappa \sum_{a \in F} \|xa-ax\|_2 \]
and this contradicts $(\rm iii)'$.

This finishes the proof of the equivalences $(\rm iii) \Leftrightarrow (\rm iii)' \Leftrightarrow (\rm iv) \Leftrightarrow (\rm iv)'$. Next, we will prove that if $M$ satisfies $(\rm iii)$ then $pMp$ also satisfies $(\rm iii)$ for every non-zero projection $p \in M$. Suppose, by contradiction, $pMp$ does not satisfy $(\rm iii)$. Then $pMp$ does not satisfy $(\rm iv)'$. Hence we can find a constant $\kappa > 0$ and a finite set $F \subset pMp$ such that
$$ \forall x \in pMp, \quad \| |x|-|x^*| \|_2^2 \leq \kappa \|x\|_2 \sum_{a \in F} \| ax-xa \|_2. $$
Take $S \subset M$ a finite set of partial isometries such that $\sum_{w \in S} w^*w=p^\perp$ and $ww^* \leq p$ for all $w \in S$. Now, take a partial isometry $v \in M$ with $v^2=0$ and let $x:=pvp$. Then we have
$$ \| v\|_2^2 = \|pv\|_2^2 + \sum_{w \in S} \| wv\|_2^2$$
and for all $w \in S$, we have
$$ \|wv \|_2^2 \leq 2( \| wv-vw\|_2^2+\|vw\|_2^2) \leq 2(\|wv-vw\|_2^2+\|vp\|_2^2). $$
Hence, we obtain
\[ \|v\|_2^2 \leq 2 |S| (\|vp\|_2^2+\|pv\|_2^2) + 2\sum_{w \in S} \|wv-vw\|_2^2. \]
Moreover, we have
\[ \|pv\|_2^2+\|vp\|_2^2 = 2 \|x\|_2^2 + \|pv-vp\|_2^2 , \]
hence
\[ \|v\|_2^2 \leq 4 |S| \|x\|_2^2 +  2|S| \|pv-vp\|_2^2+ 2\sum_{w \in S} \|wv-vw\|_2^2. \]
Now, by assumption, we have
$$\| |x|-|x^*| \|_2^2 \leq \kappa \|x\|_2 \sum_{a \in F} \| ax-xa \|_2.$$
Moreover, by using the fact that $v^2=0$, it is not hard to check that 
$$ \|x\|_2 \leq \| |x|-|x^*| \|_2 + 3 \| pv-vp\|_2 $$
which implies that
$$ \|x\|_2^2 \leq 2 \kappa \|x\|_2 \sum_{a \in F} \|ax-xa\|_2 + 18 \| pv-vp\|_2^2. $$
Therefore we obtain
$$ \|v\|_2^2 \leq 8|S|\kappa \|x\|_2 \sum_{a \in F} \|ax-xa\|_2 + 74 |S| \|pv-vp\|_2^2 + 2 \sum_{w \in S} \|wv-vw\|_2^2.$$
Finally, using the fact that $$\|x\|_2 \sum_{a \in F} \|ax-xa \|_2 \leq \|v\|_2 \sum_{a \in F} \| av-va\|_2, $$ 
$$ \|pv-vp\|_2^2 \leq 2 \|v \|_2 \|pv-vp\|_2$$
and 
$$ \|wv-vw\|_2^2 \leq 2 \|v\|_2 \|wv-vw\|_2,$$
we can conclude that
$$ \|v\|_2 \leq \kappa' \sum_{a \in F'} \|av-va\|_2. $$
for some $\kappa' > 0$, some finite set $F' \subset M$ and all partial isometries $v \in M$ with $v^2=0$. This shows that $M$ does not satisfy $(\rm iii)$ as we wanted.

Finally, one can prove $(\rm iii) \Rightarrow (\rm ii)$ by using exactly the same maximality argument that we used in the proof of Theorem \ref{local_stable_det}.
\end{proof}

\section{Another approach to Question \ref{main}}
The following lemma is extracted from \cite{IV15} and it is inspired by a trick used in \cite{Ha84}. Recall that if $M$ is a von Neumann algebra, then $\rL^2(M^\omega)$ is in general much smaller than the ultraproduct Hilbert space $\rL^2(M)^\omega$ (see \cite[Proposition 1.3.1]{Co75b}).
\begin{lem} \label{Haagerup}
Let $M$ and $N$ be finite von Neumann algebras. Fix a tracial state $\tau$ on $M$ and pick an orthonormal basis $(e_n)_{n \in \N}$ of $(M, \tau)$. Let $A=\rL^\infty(\mathbb{T}^{\N})=\rL^\infty(\mathbb{T})^{\ovt \N}$ and for each $n \in \N$, let $u_n \in \mathcal{U}(A)$ be the canonical generator of the $n$th copy of $\rL^\infty(\mathbb{T})$. Let $V : \rL^2(M) \rightarrow \rL^2(A)$ be the unique (non-surjective) isometry which sends $e_n$ to $u_n$ for every $n \in \N$. 

Then the naturally defined ultraproduct isometry
$$ (V \otimes 1)^\omega : \rL^2(M \ovt N)^\omega \rightarrow \rL^2(A \ovt N)^\omega $$ sends $\rL^2((M \ovt N)^\omega)$ into $\rL^2((A \ovt N)^\omega)$.
\end{lem}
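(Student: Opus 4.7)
The plan is to show that for every $x = (x_k)_\omega \in (M \ovt N)^\omega$ with $\sup_k \|x_k\|_\infty \leq 1$, the vector $\xi := (V \otimes 1)^\omega(\widehat{x})$ can be approximated in $\|\cdot\|_2$-norm by elements of the form $\widehat{a}$, $a \in (A \ovt N)^\omega$, at a rate uniform in $x$. The approximation will come from a spectral truncation inside the finite von Neumann algebra $A \ovt N$, and its rate will be controlled by a uniform $\rL^4$-bound on $(V \otimes 1)(\widehat{x})$ as $x$ ranges over the unit ball of $M \ovt N$.

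The heart of the argument is that $\rL^4$-bound. Decompose $\widehat{x} = \sum_n e_n \otimes y_n$ in $\rL^2(M \ovt N)$ with $y_n \in N$; then $(V \otimes 1)(\widehat{x}) = \sum_n u_n \otimes y_n$, and taking the conditional expectation $E_N \colon M \ovt N \to N$ yields $\sum_n y_n y_n^* = E_N(xx^*) =: X$ and $\sum_n y_n^* y_n = E_N(x^* x) =: Y$, both in the unit ball of $N$. Since the $u_n$ are independent Haar-uniform unitaries on $\mathbb{T}$, the Steinhaus moment identity $\tau_A(u_n^* u_m u_k^* u_l) = \mathbf{1}_{\{n,k\} = \{m,l\}}$ (as multisets) is immediate, and expanding the fourth moment collapses to
\[
\bigl\|(V \otimes 1)(\widehat{x})\bigr\|_4^4 \;=\; \tau_N(X^2) + \tau_N(Y^2) - \sum_n \tau_N\bigl((y_n^* y_n)^2\bigr) \;\leq\; 2.
\]
This is the $\rL^4$-instance of the noncommutative Khintchine inequality for Steinhaus sequences with operator coefficients in $N$, and it is the only place where the specific structure of $A$ and the definition of $V$ are used.

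Given this bound, set $\xi_k := (V \otimes 1)(\widehat{x_k})$ and view it as an operator affiliated with the finite von Neumann algebra $A \ovt N$. For $R > 0$, let $p_k^R := \mathbf{1}_{[0,R]}(|\xi_k|) \in A \ovt N$ and $a_k^R := \xi_k p_k^R \in A \ovt N$; then $\|a_k^R\|_\infty \leq R$, and a Chebyshev-type estimate applied to $|\xi_k|^4$ gives $\|\xi_k - \widehat{a_k^R}\|_2^2 \leq \|\xi_k\|_4^4 / R^2 \leq 2/R^2$. For each fixed $R$ the sequence $(a_k^R)_k$ is norm-bounded by $R$, so $a^R := (a_k^R)_\omega$ is a well-defined element of $(A \ovt N)^\omega$ with $\|\xi - \widehat{a^R}\|_2 \leq \sqrt{2}/R \to 0$ as $R \to \infty$. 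Thus $\xi$ lies in the $\|\cdot\|_2$-closure of $\{\widehat{a} : a \in (A \ovt N)^\omega\}$, which is precisely $\rL^2((A \ovt N)^\omega)$. The main obstacle in executing the plan is the combinatorial moment identity of the previous paragraph; once it is established, the truncation and ultraproduct passage are entirely routine inside a finite von Neumann algebra.
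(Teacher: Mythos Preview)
Your argument is correct. The paper does not supply its own proof of this lemma; it simply records that the statement is extracted from \cite{IV15} and inspired by Haagerup's trick in \cite{Ha84}. What you have written is precisely the Ioana--Vaes/Haagerup argument: the uniform $\rL^4$-bound on $(V\otimes 1)(\widehat{x})$ over the unit ball of $M\ovt N$ via the Steinhaus--Khintchine identity, followed by the spectral cut-off $\xi_k p_k^R$ to land in a norm-bounded sequence of $(A\ovt N)^\omega$. One small point worth making explicit in a final write-up: the decomposition $\widehat{x}=\sum_n e_n\otimes y_n$ with $y_n\in N$ (rather than merely $y_n\in\rL^2(N)$) holds because $y_n$ is the image of $x$ under the normal slice map associated with the functional $\tau_M(e_n^*\,\cdot)\in M_*$, so $\|y_n\|_\infty\le\|x\|_\infty$; alternatively, one first checks the moment identity for $x$ in the algebraic tensor product and then passes to general $x$ by Kaplansky density together with the lower semicontinuity of the $\rL^4$-norm under $\rL^2$-convergence. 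Either way this is routine, as you say.
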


Lemma \ref{Haagerup} is useful because it allows us to reduce many problems on sequences in tensor products $M \ovt N$ to the case where $M$ is abelian. We now present two applications of this principle.

The first one slightly generalizes \cite[Corollary]{IV15}. We will need it for Theorem \ref{abelian}.
\begin{prop} \label{prop_central_1}
Let $M$ and $N$ be finite von Neumann algebras. Suppose that there exists two von Neumann subalgebras $Q,P \subset N$ such that $Q' \cap N^\omega \subset P^\omega$, then we have 
$$ (1 \otimes Q)' \cap (M \ovt N)^\omega \subset (M \ovt P)^\omega.$$
\end{prop}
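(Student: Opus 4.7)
The plan is to invoke Lemma~\ref{Haagerup} to reduce to the case where the first tensor factor is the abelian algebra $A = \rL^\infty(\mathbb{T}^\N)$, prove that abelian case via disintegration and a compactness argument, and then descend the conclusion back to $M$. The key feature exploited throughout is that $V \otimes 1$ acts as the identity on the second tensor factor, so every operation that only involves $N$---namely left and right multiplication by $1 \otimes N$ and the conditional expectation $\id \otimes \rE_P$---is preserved under the transfer.

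Given $x \in (1 \otimes Q)' \cap (M \ovt N)^\omega$, I would first apply $(V \otimes 1)^\omega$ to $\widehat x$. By Lemma~\ref{Haagerup}, the resulting vector lies in $\rL^2((A \ovt N)^\omega)$ and hence equals $\widehat y$ for a unique $y \in (A \ovt N)^\omega$. At the finite level $V \otimes 1$ commutes both with left multiplication by $1 \otimes q$ and with the right multiplication operator $J(1 \otimes q)^* J = 1 \otimes J_N q^* J_N$; passing to the ultrapower, the $1 \otimes Q$-centrality of $\widehat x$ transfers to $\widehat y$, so $y \in (1 \otimes Q)' \cap (A \ovt N)^\omega$.

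The heart of the argument is the abelian case. Writing $A = \rL^\infty(X,\mu)$, a bounded representative $(y_n)$ of $y$ becomes a sequence of measurable maps $X \to N$ with $\|y_n\|_\infty \leq C$, and commutation with $1 \otimes q$ becomes $\int_X \|[y_n(t), q]\|_2^2\, d\mu(t) \to 0$ along $\omega$ for every $q \in Q$. A standard compactness/contradiction argument, using an $\rL^2$-dense sequence in the unit ball of $Q$, upgrades the hypothesis $Q' \cap N^\omega \subset P^\omega$ to the following uniform quantitative form: for every $\varepsilon > 0$ there exist a finite $F \subset Q$ and $\delta > 0$ such that any $w \in N$ with $\|w\|_\infty \leq C$ and $\max_{q \in F}\|[w,q]\|_2 < \delta$ satisfies $\|w - \rE_P(w)\|_2 < \varepsilon$. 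Applying this pointwise on $X \setminus E_n$, where $E_n := \{t : \max_{q \in F}\|[y_n(t),q]\|_2 \geq \delta\}$ has measure tending to $0$ along $\omega$ by Chebyshev, and using $\|y_n\|_\infty \leq C$ on the complement, gives $\|y_n - (\id \otimes \rE_P)(y_n)\|_2 \to 0$ along $\omega$, i.e.\ $y \in (A \ovt P)^\omega$.

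For the descent, the operator $\id \otimes \rE_P$, viewed as the orthogonal projection onto $\rL^2(\cdot \ovt P)$ inside $\rL^2(\cdot \ovt N)$, commutes with $V \otimes 1$ at the finite level (disjoint tensor factors); this passes to ultrapowers, and restricted to $\rL^2((\cdot \ovt N)^\omega)$ it becomes the orthogonal projection onto $\rL^2((\cdot \ovt P)^\omega)$. Since $\widehat y$ is fixed by this projection on the $A$ side, injectivity of $(V \otimes 1)^\omega$ forces $\widehat x$ to be fixed on the $M$ side, giving $x \in (M \ovt P)^\omega$. The main obstacle will be the abelian case: a naive slicing of $y$ against an $\rL^2$-basis of $A$ gives only term-by-term $\rL^2$-convergence, so one really must retain the full $\rL^\infty(X,N)$-structure and deploy the uniform form of the hypothesis fiberwise.
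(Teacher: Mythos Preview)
Your proposal is correct and follows essentially the same route as the paper: reduce via Lemma~\ref{Haagerup} to the case where the first tensor factor is abelian, handle that case by disintegration together with the uniform quantitative reformulation of $Q'\cap N^\omega\subset P^\omega$, and return to $M$ using that $(V\otimes 1)^\omega$ is $N$-bimodular (hence commutes with $\id\otimes\rE_P$). The only cosmetic difference is the order of presentation---the paper proves the abelian case first and then performs the transfer, whereas you transfer first.
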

\begin{proof}
First, we deal with the case where $M$ is abelian, i.e.\ $M=\rL^\infty(T,\mu)$ for some probability space $(T,\mu)$. Take $(x_n)^\omega$ in the unit ball of $(1 \ovt Q)' \cap (M \ovt N)^\omega$ and write $x_n=( t \mapsto x_n(t)) \in M \ovt N=\rL^\infty(T,\mu,N)$ for every $n \in \N$. Let $\varepsilon > 0$ and choose a finite set $F \subset Q$ and $\delta > 0$ such that for every $x$ in the unit ball of $N$ we have 
$$\left( \forall a \in F, \: \| [x,a]\|_2 \leq \delta \right) \Longrightarrow \|x-E_P(x) \|_2 \leq  \varepsilon.$$
Since $(x_n)^\omega \in (1 \ovt Q)' \cap (M \ovt N)^\omega$, we have
$$\lim_{n \rightarrow \omega} \mu \left(\{ t \in T \mid  \forall a \in F, \: \| [x_n(t),a]\|_2 \leq \delta \}\right)=1. $$
Hence, we have 
$$\lim_{n \rightarrow \omega} \mu \left(\{ t \in T \mid  \|x_n(t)-E_P(x_n(t)) \|_2 \leq  \varepsilon \}\right)=1. $$
This means that 
$$\lim_{n \rightarrow \omega} \| x_n - E_{M \ovt P}(x_n) \|_2 \leq \varepsilon$$
and since this holds for every $\varepsilon > 0$, we conclude that $(x_n)^\omega \in (M \ovt P)^\omega$.

Now, we extend to the general case where $M$ is not necessarily abelian. Let $\xi \in \rL^2((M \ovt N)^\omega)$ be an $Q$-central vector. We want to show that $\xi \in \rL^2((M \ovt P)^\omega)$. By Lemma \ref{Haagerup}, we know that $\eta=(V \otimes 1)^\omega (\xi) \in \rL^2((A \ovt N)^\omega)$. Since $(V \otimes 1)^\omega$ is $N$-bimodular, we know that $\eta$ is $Q$-central.  Hence, by the abelian case, we obtain that $\eta \in \rL^2((A \ovt P)^\omega)$. But this clearly implies that $\xi \in \rL^2((M \ovt P)^\omega)$.
\end{proof}

\begin{proof}[Proof of Theorem \ref{abelian}]
Suppose that $M \ovt N$ is McDuff, i.e.\ $(M \ovt N)_\omega$ is non-commutative. By Proposition \ref{prop_central_1}, we know that $(M \ovt N)_\omega \subset (A \ovt N)_\omega$ so that $(A \ovt N)_\omega$ is also non-commutative. Therefore, we can find $x=(x_n)^\omega$ and $y=(y_n)^\omega$ in $(A \ovt N)_\omega$ with $\| x_n \|_\infty,\| y_n \|_\infty \leq 1$ for all $n$, such that $\| [x,y] \|_2 =\delta > 0$. Let $A=\rL^\infty(T,\mu)$ with $(T,\mu)$ a probability space. Write $x_n=(t \mapsto x_n(t)) \in A \ovt N =\rL^\infty(T,\mu,N)$ with $\| x_n(t) \|_\infty \leq 1$ for all $n$ and $t$. Similarly, let $y_n=(t \mapsto y_n(t))$. Fix $F \subset N$ a finite subset and $\varepsilon > 0$. Since $x,y \in (A \ovt N)_\omega$, we know that
$$ \lim_{n \to \omega} \mu( \{ t \in T \mid \forall a \in F, \: \|[x_n(t),a]\|_2 \leq \varepsilon \})=1$$
and
$$ \lim_{n \to \omega} \mu( \{ t \in T \mid \forall a \in F, \: \|[y_n(t),a]\|_2 \leq \varepsilon \})=1.$$
Moreover, since $\| [x,y] \|_2=\delta > 0$, we have
$$ \lim_{n \to \omega} \mu( \{ t \in T \mid \|[x_n(t),y_n(t)]\|_2 \geq \delta/2 \}) > 0.$$
Hence, for $n$ large enough, the intersection of these three sets is non-empty, i.e.\ there exists $t$ such that 
$$ \forall a \in F, \: \|[x_n(t),a]\|_2 \leq \varepsilon,$$
$$ \forall a \in F, \: \|[y_n(t),a]\|_2 \leq \varepsilon,$$
$$\|[x_n(t),y_n(t)]\|_2 \geq \delta/2.$$
Hence, by iterating this procedure, we can extract a sequence $a_k=x_{n_k}(t_k), \: k \in \N$ and $b_k=y_{n_k}(t_k), \: k \in \N$ such that $a=(a_k)^\omega$ and  $b=(b_k)^\omega$ are in  $N_\omega$ and $\| [a,b] \|_2 \geq \delta/2$. Thus $N_\omega$ is not commutative, i.e.\ $N$ is McDuff as we wanted. 
\end{proof}

The second application is the following lemma which we will need in the proof of Theorem \ref{factor}.
\begin{lem} \label{hypercentral}
Let $M$ and $N$ be finite von Neumann algebras. Then we have 
$$1 \otimes \mathcal{Z}(N_\omega) \subset \mathcal{Z}((1 \otimes N)' \cap (M \ovt N)^\omega).$$
\end{lem}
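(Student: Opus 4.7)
The plan is to reduce to the case where $M$ is abelian by means of the Haagerup-style isometry of Lemma~\ref{Haagerup}, and then to dispatch the abelian case by a measure-theoretic slicing plus a diagonal extraction that activates the defining property of $\mathcal{Z}(N_\omega)$.

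For the reduction, fix $y = (y_n)^\omega \in (1 \otimes N)' \cap (M \ovt N)^\omega$ and $z = (z_n)^\omega \in \mathcal{Z}(N_\omega)$; the goal is $\|[y, 1 \otimes z]\|_2 = 0$. Since $V$ acts only on the $M$-tensor-factor, the isometry $(V \otimes 1)^\omega$ of Lemma~\ref{Haagerup} is $(1 \otimes N)$-bimodular for both left and right actions, so the vector $\xi := (V \otimes 1)^\omega(\widehat y) \in \rL^2((A \ovt N)^\omega)$ is $(1 \otimes N)$-central and
\[ \|[y, 1 \otimes z]\|_2 = \|[1 \otimes z, \xi]\|_2. \]
Because the $(1 \otimes N)$-central vectors in the standard form of $(A \ovt N)^\omega$ coincide with $\rL^2((1 \otimes N)' \cap (A \ovt N)^\omega)$, the vector $\xi$ can be approximated in 2-norm by $w \widehat 1$ with $w \in (1 \otimes N)' \cap (A \ovt N)^\omega$. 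Hence it is enough to prove the lemma when $M = A = \rL^\infty(T, \mu)$ is abelian: granting this, one has $[w, 1 \otimes z] = 0$ for each such $w$, so $[1 \otimes z, w \widehat 1] = 0$, and a standard $\varepsilon$-argument concludes $[1 \otimes z, \xi] = 0$.

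In the abelian case, I represent $w = (w_n)^\omega \in (1 \otimes N)' \cap (A \ovt N)^\omega$ by bounded measurable functions $w_n : T \to N$. The hypothesis reads $\lim_{n \to \omega} \int_T \|[w_n(t), a]\|_2^2 \, \rd \mu(t) = 0$ for every $a \in N$, and the desired conclusion rewrites as $\lim_{n \to \omega} \int_T \|[w_n(t), z_n]\|_2^2 \, \rd \mu(t) = 0$. Assume, for contradiction, that this last limit equals some $\delta > 0$. Boundedness of the integrand yields $\gamma > 0$ and, for $\omega$-many $n$, a measurable set $S_n \subset T$ with $\mu(S_n) \geq \gamma$ on which $\|[w_n(t), z_n]\|_2 \geq \sqrt{\delta/2}$. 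Fix a countable $\|\cdot\|_2$-dense sequence $\{a_k\}_{k \in \N}$ in the unit ball of $N$; Markov's inequality ensures that, for every $k, m \in \N$, the measure of $\{t \in T : \|[w_n(t), a_k]\|_2 \geq 1/m\}$ tends to zero as $n \to \omega$. A standard diagonal extraction then yields integers $m_n \to \infty$ along $\omega$ and points $t_n \in S_n$ with $\|[w_n(t_n), a_k]\|_2 < 1/m_n$ for every $k \leq m_n$. Consequently $(w_n(t_n))_n$ is bounded in $N$ and asymptotically commutes with each $a_k$, hence with all of $N$, so $(w_n(t_n))^\omega \in N_\omega$; the relation $z \in \mathcal{Z}(N_\omega)$ then forces $\lim_{n \to \omega} \|[w_n(t_n), z_n]\|_2 = 0$, contradicting $\|[w_n(t_n), z_n]\|_2 \geq \sqrt{\delta/2}$.

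The main technical point will be the simultaneous diagonal extraction of the basepoints $t_n \in S_n$: they must both witness a macroscopic commutator with $z_n$ and make $(w_n(t_n))_n$ asymptotically $N$-central on a growing finite family of test elements. Once this has been carried out, the hypercentrality of $z$ inside $N_\omega$ closes the argument immediately.
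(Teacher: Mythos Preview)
Your proof is correct and follows the same global architecture as the paper: reduce to the case $M$ abelian via the Haagerup-style isometry of Lemma~\ref{Haagerup}, and then handle the abelian case by slicing $w_n \in \rL^\infty(T,\mu,N)$ fiberwise. The reduction step is identical to the paper's, only you make the passage from an $N$-central vector in $\rL^2((A\ovt N)^\omega)$ to elements of the commutant explicit via an $\varepsilon$-approximation, whereas the paper leaves this implicit.

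Where you diverge is in the abelian case. The paper proceeds \emph{directly}, invoking \cite[Lemma~10]{McD69}: for $z\in\mathcal Z(N_\omega)$ one gets sets $U_k\in\omega$ on which $z_n$ is uniformly $1/k$-central against the whole neighborhood $N_k=\{x\in (N)_1: \|[x,a_r]\|_2\le 1/k,\ r\le k\}$; combined with the fact that $x_n(t)\in N_k$ for most $t$ when $n$ is large, this immediately gives $\int_T\|[z_n,x_n(t)]\|_2^2\,\rd\mu(t)\to 0$. You instead argue by contradiction: if the integrated commutator stays bounded below, a pigeonhole plus diagonal extraction produces basepoints $t_n$ for which $(w_n(t_n))^\omega$ lies in $N_\omega$ yet fails to commute with $z$. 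Your route is slightly more elementary in that it avoids the external McDuff lemma and only uses the \emph{definition} of $\mathcal Z(N_\omega)$; the paper's route is shorter and yields an explicit quantitative bound $\|[1\otimes z_n,x_n]\|_2^2\le 5/k^2$ on $U_k\cap V_k$ without any contradiction. Both are valid, and the diagonal extraction you describe goes through as stated (the constant $\sqrt{\delta/2}$ should perhaps be $\sqrt{\delta/4}$, but this is cosmetic).
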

\begin{proof}
First, we treat the case where $M$ is abelian, i.e.\ $M=\rL^\infty(T,\mu)$ for some probability space $(T,\mu)$. Let $(a_k)_{k \in \N}$ be a $\| \cdot \|_2$-dense sequence in $(N)_1$ and let 
$$ N_k:= \{ x \in (N)_1 \mid \forall r \leq k, \: \| [x,a_r]\|_2 \leq 1/k \}.$$ 

 Let $y=(y_n)^{\omega} \in \mathcal{Z}(N_\omega)$ with $\|y_n\|_\infty$ for all $n$. By \cite[Lemma 10]{McD69}, there exists a sequence of sets $U_k \in \omega, \: k \in \N$ such that
$$\forall k \in \N, \: \forall x \in N_k, \: \forall n \in U_k, \quad \| [y_n,x] \|_2  \leq 1/k.$$

Let $x=(x_n)^\omega \in (1 \otimes N)' \cap (M \ovt N)^\omega$ with $\|x_n\|_\infty \leq 1$ for all $n \in \N$. We want to show that $x(1 \otimes y)=(1\otimes y)x$. Write $x_n=( t \mapsto x_n(t)) \in M \ovt N=\rL^\infty(T,\mu,N)$ with $\| x_n(t) \|_\infty \leq 1$ for all $t$ and all $n \in \N$. Since $x \in (1 \otimes N)' \cap (M \ovt N)^\omega$, there exists a sequence of sets $V_k \in \omega$ such that
$$ \mu( \{ t \in T \mid x_n(t) \in N_k \}) \geq 1-1/k^2 $$ 
for all $n \in V_k$.

Therefore, for all $n \in U_k \cap V_k$, we have
$$ \mu( \{ t \in T \mid \| [y_n,x_n(t)]\|_2 \leq 1/k \}) \geq 1-1/k^2 $$
which implies that
$$ \| [1 \otimes y_n, x_n] \|_2^2 =\int_{T}\| [y_n,x_n(t)]\|_2^2 \: \rd \mu(t)   \leq 5/k^2 .$$
Since $U_k \cap V_k \in \omega$ for all $k \in \N$, we conclude that $\lim_{n \to \omega} \| [1 \otimes y_n, x_n]\|_2 =0$ as we wanted.

Finally, we extend to the general case where $M$ is not necessarily abelian. Let $\xi \in \rL^2((M \ovt N)^\omega)$ be an $N$-central vector. We want to show that $\xi$ is $\mathcal{Z}(N_\omega)$-central. By Lemma \ref{Haagerup}, we know that $\eta=(V \otimes 1)^\omega (\xi) \in \rL^2((A \ovt N)^\omega)$. Since $(V \otimes 1)^\omega$ is $N$-bimodular, we know that $\eta$ is $N$-central.  Hence, by the abelian case, we obtain that $\eta$ is $\mathcal{Z}(N_\omega)$-central. Since $(V \otimes 1)^\omega$ is $N^\omega$-bimodular, we conclude that $\xi$ is also $\mathcal{Z}(N_\omega)$-central.
\end{proof}

\begin{proof}[Proof of Theorem \ref{factor}]
By Lemma \ref{hypercentral}, we know that $1 \otimes \mathcal{Z}(N_\omega)$ is contained in the center of $(1 \otimes N)' \cap (M \ovt N)^\omega$ hence it is also contained in the center of $(M \ovt N)_\omega$. Since $(M \ovt N)_\omega$ is a factor, this implies that $N_\omega$ is also factor, and the same argument shows that $M_\omega$ is a factor. 

Now suppose that $M \ovt N$ is McDuff. Then $(M \ovt N)_\omega$ is non-trivial. Thus $M_\omega$ or $N_\omega$ is also non-trivial (use \cite[Corollary 2.2]{Co75b} or Prososition \ref{prop_central_1}). This means that $M_\omega$ or $N_\omega$ is a non-trivial factor. In particular, $M$ or $N$ is McDuff.
\end{proof}

\begin{proof}[Proof of Corollary \ref{rigid}]
First, we show that $M_\omega$ is a factor. Suppose that $(x_k)_{k \in \N}$ is a non-trivial central sequence in $N$ with $\|x_k\|_2=1$ and $\tau(x_k)=0$ for all $k \in \N$. Then, since $Q_n$ is non-Gamma, we know by \cite[Theorem 2.1]{Co75b} that $\lim_k \|x_k-E_{Q_n' \cap M}(x_k)\|_2=0$ for every $n \in \N$, where
$$ Q_n = M_0 \ovt M_1 \ovt  \cdots \ovt M_n \otimes 1 \otimes 1 \otimes \cdots  \subset M.$$
Hence we can find a sequence $(n_k)_{k \in \N}$ with $n_k \rightarrow \infty$ such that $\| E_{Q_{n_k}' \cap M }(x_k) \| \geq \frac{1}{2}$ for all $k \in \N$. Thus, we can find $y_k$ in the unit ball of $Q_{n_k}' \cap M$ such that $\|[x_k,y_k]\|_2 \geq \frac{1}{4}$ for all $k \in \N$. But, by construction, $(y_k)_{k \in \N}$ is a central sequence in $M$. This shows that $(x_k)_{k \in \N}$ is not hypercentral and therefore that $M_\omega$ is factor. By Theorem \ref{factor}, we conclude that $N_\omega$ is a factor. Moreover, $N$ has property Gamma because of \cite[Theorem 4.1]{Po10}. This means that $N_\omega$ is non-commutative or equivalently that $N$ is McDuff. Thus $N \cong N \ovt R \cong M$ as we wanted.
\end{proof}

\bibliographystyle{plain}

\end{document}